\documentclass[12pt]{article}
\usepackage{amssymb,amsthm,hyperref,amsmath,bm,amsfonts}
\usepackage{arydshln}
\usepackage{verbatim}
\usepackage{graphicx}
\usepackage{float}

\usepackage{subfigure}
\usepackage{multirow}
\usepackage{color}
\usepackage{appendix}

\usepackage{latexsym,amsmath,amsfonts,amscd}
\usepackage{epsfig}
\usepackage{changebar}
\usepackage{pstricks}
\usepackage{pst-plot}
\usepackage{multirow}
\usepackage{subfigure}
\usepackage{placeins}
\usepackage{amssymb}
\usepackage{enumerate}
\usepackage{tikz}
\usepackage{booktabs}

\newtheorem{theorem}{Theorem}[section]
\newtheorem{lemma}[theorem]{Lemma}

\newtheorem{definition}{Definition}[section]%

\newtheorem{remark}{Remark}[section]
\numberwithin{equation}{section}

\title{Optimal nodal control and the exponential turnpike property for the flow in gas networks}
\author{Martin Gugat \thanks{Chair for Dynamics, Control, Machine Learning and Numerics – Alexander von Humboldt Professorship, Department Mathematik, Friedrich-Alexander-Universität Erlangen-Nürnberg (FAU), Erlangen, Germany}, 
Michael Herty\thanks{Chair in Numerical Analysis, IGPM, RWTH Aachen University, Im Süsterfeld 2, D-52072 Aachen, Germany (herty@igpm.rwth-aachen.de); Extraordinary Professor, Department of Mathematics and Applied Mathematics, University of Pretoria, Private Bag X20, Hatfield 0028, South Africa} , Michael Schuster \thanks{Chair for Analytics \& Mixed-Integer Optimization, Department of Data Science, Friedrich-Alexander-Universität Erlangen-Nürnberg (FAU), Erlangen, Germany}, 
Yizhou Zhou \thanks{Corresponding author, IGPM, RWTH Aachen University, Im Süsterfeld 2, D-52072 Aachen, Germany (zhou@igpm.rwth-aachen.de)}}

\date{}
\begin{document}
\maketitle{}

\begin{abstract}
We study an optimal boundary control problem for a quasilinear hyperbolic system describing isothermal gas flow in pipelines. The control acts at network nodes and models the action of compressor stations. The system admits non-constant steady states, which are strictly monotone in space.

We work in the framework of $H^2$-solutions and consider an optimal control problem that steers the system toward a prescribed steady state. We  establish the existence of optimal controls and then prove the exponential turnpike property, showing that optimal trajectories remain exponentially close to the corresponding steady state over most of the time horizon.
The analysis relies on a cheap control property obtained via a Lyapunov approach and on a constructed control. Numerical simulations illustrate the theoretical result.
\end{abstract}

\section{Introduction}

The turnpike property for the optimal control of systems governed by partial differential equations has received considerable attention in recent years. 
The exponential turnpike property states that the optimal solution remains (exponentially) close to a reference solution for long time horizons. Usually, this reference solution is taken as the optimal solution to the corresponding static problem. 
For systems governed by ordinary differential equations, turnpike theory covers a broad class of general optimal control problems; see the surveys 
\cite{trelat2025turnpike},
and 
\cite{faulwasser2022turnpike, faulwasser2017turnpike}, which also discuss the time-discrete case. 

For systems governed by partial differential equations (PDEs),
results for distributed control are presented in \cite{porretta2013long}.
Applications often lead to boundary control problems in which the control operators are unbounded. In the case of linear--quadratic optimal control problems, the turnpike property for such settings was studied in 
\cite{nguyen2026turnpike, gugat2019turnpike, gugat2016optimal}.

In this paper, we consider the boundary control of a system governed by a quasilinear hyperbolic partial differential equation, which is not covered by the linear theory. More specifically, the system is described by the isothermal Euler equations modeling gas flow through pipelines (see \cite{gugat2022modeling}). In gas pipeline networks, the flow is driven by compressor stations. The corresponding control action is modeled as acting at a specific point in space, which we interpret as a node of the network graph. We refer to this setting as nodal control. The system under consideration is closely related to the one studied in \cite{MR2775185}.
 
Our main contribution is the proof of an exponential turnpike property for the optimal control problem. In our analysis, we work with $H^2$-solutions (see \cite{Coron-book}), which constitute a special class of semi-global classical solutions studied in \cite{MR2655971} and \cite{wang2006exact}.
The steady states of our system include functions that are strictly decreasing (or strictly increasing), and therefore their spatial derivatives do not vanish. Around such a steady state, we formulate an optimal control problem whose objective functional drives the flow of the time-dependent system toward the prescribed steady state. Assuming that the initial deviation is sufficiently small, we establish the existence of an optimal control.
To prove the exponential turnpike property, we first establish a cheap control property for the optimal control problem by means of a Lyapunov approach. As part of this analysis, we also introduce a constructed control for the system. This is motivated by the stabilizing nodal feedback law. Building on these results, we then derive the exponential turnpike property by following the approach developed in \cite{MR4955417}, which is validated and visualized by a numerical experiment for a network with two pipes and one compressor station. 

Feedback laws for hyperbolic systems with partially dissipative source terms have also been investigated in \cite{MR3500083,MR4759437,MR3895286} for linear systems and in \cite{wang2020boundary,MR4898367} for nonlinear problems. However, in these works the steady states are constant functions, whereas in our setting, due to the friction term in the PDE, the steady states are strictly decreasing (or strictly increasing) functions. In our work the feedback law serves solely as a tool for establishing the turnpike property of the associated optimal control problem.
The existence of optimal controls was studied in \cite{MR2516198} within the framework of BV weak solutions. In contrast, our existence result is established in the setting of $H^2$-solutions and our main focus is the proof of the exponential turnpike result. 


The rest of the paper is organized as follows. In Section \ref{section:2}, we formulate an optimal control problem for compressor-driven gas flow in pipelines. After introducing the model and its steady states, we prove the existence of an optimal control solution. Section \ref{section:3} is devoted to establishing the cheap control property by means of a Lyapunov method. Based on this, the exponential turnpike property is stated and proved in Section \ref{section:4}. The numerical experiments are employed in Section \ref{section:5} to validate the theoretical results. At last, we give concluding remarks in Section \ref{section:6}.

\section{The optimal control problem for the gas flow}\label{section:2}

In this section, we formulate an optimal control problem to model the operation of a compressor controlling gas flow in pipelines. We begin by introducing the underlying model, including the governing equations and the nodal conditions. Next, we analyze the steady state and describe how the time-dependent system evolves toward this equilibrium. Building on these results, we then formulate the optimal control problem and rigorously establish the existence of a solution.

\subsection{The model for pipeline gas flow}

We start with the
isothermal Euler equations that model the gas flow 
in a pipeline 
(see \cite{banda2006coupling}) 
\begin{align}
\rho_t+q_x&=0 \label{gas-eq1}\\[1mm]
q_t+\left(\frac{q^2}{\rho}+a^2\rho\right)_x &= -f_g\frac{q|q|}{2D\rho}. \label{gas-eq2}
\end{align}
Here $\rho=\rho(x,t)$ denotes the density of the gas, $q=q(x,t)$ is the mass flux in the pipe, $f_g$ is the friction factor and $D$ is
the diameter of the pipe.

We introduce the Riemann invariants
$$
R_+=-\frac{q}{\rho}-a\ln \rho,\qquad 
R_-=-\frac{q}{\rho}+a\ln \rho
$$
and rewrite the equation \eqref{gas-eq1}-\eqref{gas-eq2} as
\begin{align}\label{gas-RI-eq1}
\partial_t
\begin{pmatrix}
R_+\\[1mm]
R_-
\end{pmatrix}
+
\begin{pmatrix}
\lambda_+ &  0\\[1mm]
 0 & \lambda_-
\end{pmatrix}
\partial_x\begin{pmatrix}
R_+\\[1mm]
R_-
\end{pmatrix}=
\begin{pmatrix}
G\\[1mm]
G
\end{pmatrix}
\end{align}
with 
$$
\lambda_\pm =\lambda_\pm(R_+,R_-)= -\frac{R_++R_-}{2}\pm a,\qquad 
G=G(R_+,R_-) = \frac{f_g}{8D}(R_++R_-)^2.
$$
Throughout this paper, we
only consider the subsonic flow. Namely,
\begin{equation}\label{subsonic}
0<\frac{
q}{\rho}<a.
\end{equation}
This gives $\lambda_+ \, \lambda_- < 0$.
Then at each pipe there is one incoming
Riemann invariant and one outgoing Riemann invariant. Note that under normal operating conditions of gas-transportation systems (see, e.g., \cite{KochEtAl2015, GugatEtAl2015, Gas-Modell:DomschkeHillerLangTischendorf2017, UlkeEtAl2025}), this assumption is satisfied.


\subsection{Node and boundary conditions}

In order to treat networks of pipes controlled by compressors, we consider the system as that in \cite{MR2775185}. Details are depicted in  Figure \ref{fig1}. Namely, two pipes are connected by a node in the middle. Denote $(\rho^{(i)},q^{(i)})$ to be solutions in the pipe $i=1,2$ respectively. Then $(\rho^{(i)},q^{(i)})$ satisfies the equations \eqref{gas-eq1}-\eqref{gas-eq2} on $(0,L_i)$.
In the middle node, we consider a compressor $u_0=u_0(t)$ such that:
\begin{align}
    q^{(1)}(L_1,t)&=q^{(2)}(0,t) \label{couplingMass} \\[1mm]
    u_0(t)&=q^{(2)}(0,t)\left[\left(\frac{\rho^{(2)}(0,t)}{\rho^{(1)}(L_1,t)} \right)^\kappa -1\right]. \label{couplingCompressor}
\end{align}
Here $\kappa\in[\frac{1}{3}, \frac{3}{5}]$ is the isentropic exponent, which depends on the gas under consideration. The compressor power $u_0=u_0(t)$ serves as the control variable. Moreover, mass conservation requires the flow to be constant along the compressor edges. In this work, we assume that the pipe satisfies the inequality $q^{(1)}(L_1,t)=q^{(2)}(0,t) \geq 0$ , i.e., the gas flows from
the first pipe into the second pipe.

\begin{figure}[h!]
\begin{center}
\begin{tikzpicture}[
    every node/.style={circle, draw, minimum size=1mm}, >=stealth,
    thick
]

\node (u1) at (0,0) {$u_1$};
\node (u2) at (7,0) {$u_2$};
\node (u0) at (3.2,0) {$u_0$};

\node[draw=none, above] at (1.6,-0.5) {pipe $(1)$};
\node[draw=none, above] at (5.1,-0.5) {pipe $(2)$};
\node[draw=none, below] at (3.2,0.2) {compressor};

\draw[->] (u1) -- (u0);
\draw[->] (u0) -- (u2);

\end{tikzpicture}
\end{center}
\caption{Structure of the control system for a compressor station with two pipes.}
\label{fig1}
\end{figure}

Using the Riemann invariants, we can also write the node conditions as 
\begin{align}
    R_-^{(1)}(L_1,t)&=\Phi_1(R_+^{(1)}(L_1,t),R_-^{(2)}(0,t),u_0(t)) \label{gas-RI-eq2}\\[1mm]
    R_+^{(2)}(0,t)&=\Phi_2(R_+^{(1)}(L_1,t),R_-^{(2)}(0,t),u_0(t)). \label{gas-RI-eq3}
\end{align}
Here $\Phi_1$ and $\Phi_2$ are two smooth functions of $(R_+^{(1)}(L_1,t),R_-^{(2)}(0,t),u_0(t))$. Moreover, for the first pipe at $x=0$ and for the second pipe at $x=L_2$, we consider the boundary conditions 
\begin{align}
    R_+^{(1)}(0,t)&=u_1(t)\label{gas-RI-eq4}\\[1mm]
    R_-^{(2)}(L_2,t)&=u_2(t) \label{gas-RI-eq5}.
\end{align}
Moreover, the initial data for $R^{(i)}=(R^{(i)}_+,R^{(i)}_-)$ are given by
\begin{equation}\label{R-initial-data}
    R^{(i)}(x,0)=R^{(i)}_0(x)=(R^{(i)}_{0,+},R^{(i)}_{0,-})(x),\qquad R_{0,\pm}^{(i)}(x) = -\frac{q^{(i)}_0}{\rho^{(i)}_0}\mp a\ln \rho^{(i)}_0,
\end{equation}
where $\rho_0^{(i)}$ and $q_0^{(i)}$ are initial data for the density and flux in the $i$-th pipe:
$$
\rho^{(i)}(x,0)=\rho^{(i)}_0(x),\qquad q^{(i)}(x,0)=q^{(i)}_0(x).
$$
In what follows, for simplicity, we assume that $L_1=L_2=L$ without loss of generality. 

\subsection{Steady states}

In this subsection, we discuss the static controls $\bar{u}_i~(i=0,1,2)$ such that equations \eqref{gas-RI-eq1}-\eqref{gas-RI-eq5} admit an intransient  solution $\bar{R}^{(i)}=(\bar{R}_+^{(i)},\bar{R}_-^{(i)})$. 
In this work, we follow the framework in \cite{MR2775185} and state 
\begin{lemma}
Let $\bar{\rho}_0>0$ and $\bar{q}_0>0$ be two positive constants such that $\bar{q}_0/\bar{\rho}_0<a$. Then there exists a unique $C^2$-stationary subsonic solution $(\bar{\rho}(x), \bar{q}(x))$ on the interval $[0, \bar{L})$
that verifies the boundary conditions
$(\bar{\rho}(0), \bar{q}(0))=(\bar{\rho}_0, \bar{q}_0)$. Here $\bar{L}$ is the maximum length for the existence with 
$$
\bar{L}=\bar{L}(\bar{\rho}_0,\bar{q}_0) = \frac{D}{f_g\bar{q}_0^2}(F(\bar{\rho}_0)-F(\bar{q}_0/a))
$$
and $F(\bar{\rho})=-2 \bar{q}^2\ln(\bar{\rho}) +a^2\bar{\rho}^2$.
\end{lemma}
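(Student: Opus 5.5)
Setting $\partial_t=0$ in \eqref{gas-eq1}--\eqref{gas-eq2} reduces the stationary problem to an ODE. From \eqref{gas-eq1} we get $\bar q_x=0$, so $\bar q\equiv\bar q_0>0$ on the whole interval. Equation \eqref{gas-eq2} then becomes the scalar ODE
\begin{equation*}
\left(a^2-\frac{\bar q_0^2}{\bar\rho^2}\right)\bar\rho_x=-\frac{f_g\,\bar q_0^2}{2D\bar\rho},\qquad \bar\rho(0)=\bar\rho_0 .
\end{equation*}
The subsonic condition $\bar q_0/\bar\rho<a$ is exactly $\bar\rho>\bar q_0/a$. On this set the coefficient $a^2-\bar q_0^2/\bar\rho^2$ is strictly positive, so the right-hand side $g(\bar\rho)=-\frac{f_g\bar q_0^2}{2D}\,\frac{\bar\rho}{a^2\bar\rho^2-\bar q_0^2}$ is smooth (indeed real-analytic) on $(\bar q_0/a,\infty)$. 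Picard--Lindelöf therefore gives a unique local solution. Since $g<0$, the solution $\bar\rho$ is strictly decreasing, and since $g$ is smooth, $\bar\rho$ is $C^\infty$, in particular $C^2$. Any $C^2$ stationary subsonic solution must satisfy this ODE, which settles uniqueness.

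To identify the maximal interval, I will integrate the ODE in closed form. Multiplying by $2\bar\rho\,(a^2\bar\rho^2-\bar q_0^2)/\bar\rho^2$ yields
\begin{equation*}
\frac{d}{dx}\Big(a^2\bar\rho^2-2\bar q_0^2\ln\bar\rho\Big)=-\frac{f_g\bar q_0^2}{D},
\end{equation*}
that is, $F(\bar\rho(x))=F(\bar\rho_0)-\frac{f_g\bar q_0^2}{D}x$ with $F(\rho)=a^2\rho^2-2\bar q_0^2\ln\rho$. Since $F'(\rho)=2(a^2\rho^2-\bar q_0^2)/\rho>0$ for $\rho>\bar q_0/a$, $F$ is a strictly increasing bijection from $(\bar q_0/a,\infty)$ onto $(F(\bar q_0/a),\infty)$. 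Consequently the implicit relation has a unique solution $\bar\rho(x)=F^{-1}\big(F(\bar\rho_0)-\frac{f_g\bar q_0^2}{D}x\big)$ in the subsonic range exactly as long as $F(\bar\rho_0)-\frac{f_g\bar q_0^2}{D}x>F(\bar q_0/a)$. This condition is $x<\bar L=\frac{D}{f_g\bar q_0^2}\big(F(\bar\rho_0)-F(\bar q_0/a)\big)$, and $\bar L>0$ because $\bar\rho_0>\bar q_0/a$.

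The main obstacle is maximality: showing that the solution genuinely cannot be continued to or beyond $\bar L$. As $x\uparrow\bar L$ we have $\bar\rho(x)\downarrow\bar q_0/a$, so the flow becomes sonic. Moreover, $\bar\rho_x=g(\bar\rho)\to-\infty$ because the denominator $a^2\bar\rho^2-\bar q_0^2\to0$. Hence no $C^1$ (let alone $C^2$) subsonic extension exists at $x=\bar L$. On $[0,\bar L)$, by contrast, the formula via $F^{-1}$ (smooth by the inverse function theorem, since $F'>0$) provides the $C^2$ solution. This gives the claimed maximal length, with $F$ read as depending on the fixed $\bar q=\bar q_0$.
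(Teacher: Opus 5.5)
Your proposal is correct and follows essentially the same route as the paper: the mass equation forces $\bar q\equiv\bar q_0$, the momentum equation integrates to $F(\bar\rho(x))=F(\bar\rho_0)-\frac{f_g\bar q_0^2}{D}x$, and inverting the strictly increasing $F$ on the subsonic range $(\bar q_0/a,\infty)$ gives $\bar L$. Your extra steps only make explicit what the paper leaves implicit: Picard--Lindel\"of, the range of $F$, and the blow-up of $\bar\rho_x$ at the sonic point as the maximality argument.
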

\begin{proof}
We start with the original form \eqref{gas-eq1}-\eqref{gas-eq2}. Clearly, for the steady state $(\rho,q)\equiv (\bar{\rho},\bar{q})$, the mass flux $\bar{q}=\bar{q}_0$ is just a constant and $\bar{\rho}$ is a solution to the ODE 
$$
\left(\frac{\bar{q}^2}{\bar{\rho}}+a^2\bar{\rho}\right)_x = -f_g\frac{\bar{q}|\bar{q}|}{2D\bar{\rho}},\qquad \bar{\rho}(0)=\bar{\rho}_0.
$$
For $\bar{\rho}>0$, we derive from the ODE system that 
$$
\left(-2 \bar{q}^2\ln(\bar{\rho}) +a^2\bar{\rho}^2\right)_x = -f_g\frac{\bar{q}^2}{D}~~\Rightarrow~~ F(\bar{\rho}) = F(\bar{\rho}(0)) -f_g\frac{\bar{q}^2}{D}x.
$$
For $\bar{\rho}\in(\bar{q}_0/a,\infty)$, we compute
$$
F'(\bar{\rho})=-\frac{2\bar{q}^2}{\bar{\rho}}+2a^2\bar{\rho}>0.
$$
Thus there exists a strictly increasing function $H(\cdot)=F^{-1}(\cdot)$ such that 
$$
\bar{\rho}(x)=H\left(F(\bar{\rho}_0) -f_g\frac{\bar{q}^2}{D}x\right).
$$
Since $\bar{q}>0$, we know that $\bar{\rho}(x)$ is strictly decreasing w.r.t. $x>0$. Thus the density $\bar{\rho}(x)>\bar{q}_0/a$ if and only if 
$$
F(\bar{\rho}(x))= F(\bar{\rho}_0) -f_g\frac{\bar{q}^2}{D}x > F(\bar{q}_0/a).
$$
This gives the critical length and completes the proof.
\end{proof}

Next, we consider the node problem where our system consists of two pipes coupled through a compressor.
We give $\bar{\rho}^{(1)}(0)=\bar{\rho}_0$ as a constant at the left node. Due to the coupling condition $q^{(1)}(L,t)=q^{(2)}(0,t)$, we
have the same constant flux $\bar{q}=\bar{q}_0$ on both pipes. On the first pipe, we prescribe the constant control 
$$
\bar{u}_1 = -\frac{\bar{q}_0}{\bar{\rho}_0}-a\ln \bar{\rho}_0
$$
such that the boundary condition \eqref{gas-RI-eq4} is fulfilled. For sufficiently large $\bar{\rho}_0$, if $L<\bar{L}(\bar{\rho}_0,\bar{q}_0)$, then there exists a $C^2$ solution.  According to the  coupling condition of the compressor, we have $$\bar{\rho}^{(2)}(0) = \bar{\rho}^{(1)}(L) (\bar{u}_0/\bar{q}_0 + 1)^{1/\kappa}$$ where $\bar{u}_0 \geq 0$ is the constant compressor
control. For sufficiently large $\bar{u}_0$, we see that $\bar{\rho}^{(2)}(0)$ is also large enough such that $L<\bar{L}(\bar{\rho}^{(2)}(0),\bar{q}_0)$. In this case, we also have the $C^2$ solution $\bar{\rho}^{(2)}(x)$ in the second pipe. At last, we set the constant control 
$$
\bar{u}_2 = -\frac{\bar{q}_0}{\bar{\rho}^{(2)}(L)}+a\ln (\bar{\rho}^{(2)}(L))
$$
to fit the boundary condition \eqref{gas-RI-eq5}.

\subsection{Nonstationary system}
After analyzing the steady-state formulation, we now turn to the time-dependent problem, which describes the evolution of the system toward steady state.
In order to consider the deviation between the solution of the dynamic problem and the steady state, we denote 
$$
r^{(i)}  = R^{(i)}  - \bar{R}^{(i)} 
$$
with 
$$
r^{(i)}(x,t)=\left(r_{+}^{(i)},r_{-}^{(i)}\right)(x,t),\quad 
R^{(i)}(x,t)=\left(R_{+}^{(i)},R_{-}^{(i)}\right)(x,t),\quad 
\bar{R}^{(i)}(x)=\left(\bar{R}_{+}^{(i)},\bar{R}_{-}^{(i)}\right)(x).
$$
Then we derive from \eqref{gas-RI-eq1} that
\begin{align}
\partial_tr_+^{(i)} 
+\left(\bar{\lambda}^{(i)}_+ - \dfrac{r_+^{(i)}+r_-^{(i)}}{2}\right)\partial_xr_+^{(i)}
=-\left(r_+^{(i)}+r_-^{(i)}\right)
\left(K_+-\frac{f_g}{8D}\left(r_+^{(i)}+r_-^{(i)}\right)\right)\label{gas-r-eq1}\\[2mm]
\partial_tr_-^{(i)} 
+\left(\bar{\lambda}^{(i)}_- - \dfrac{r_+^{(i)}+r_-^{(i)}}{2}\right)\partial_xr_-^{(i)}
=-\left(r_+^{(i)}+r_-^{(i)}\right)
\left(K_--\frac{f_g}{8D}\left(r_+^{(i)}+r_-^{(i)}\right)\right)\label{gas-r-eq2}
\end{align}
with 
$$
\bar{\lambda}^{(i)}_\pm = -\frac{\bar{R}^{(i)}_++\bar{R}^{(i)}_-}{2}\pm a=\frac{\bar{q}^{(i)}}{\bar{\rho}^{(i)}}\pm a,
\quad
K_\pm = -\frac{f_g}{8D}\left(
\bar{R}_+^{(i)}+\bar{R}_-^{(i)}\right)
\frac{4a\mp
(\bar{R}_+^{(i)}+\bar{R}_-^{(i)})}{2a\mp(
\bar{R}_+^{(i)}+\bar{R}_-^{(i)})}.
$$
Due to the positive mass flux assumption and the subsonic condition \eqref{subsonic}, we have 
$$
-a<\frac{\bar{R}_+^{(i)}+\bar{R}_-^{(i)}}{2}<0
$$
and thereby obtain 
$K_\pm(x)>0$
for any $x\in [0,L]$.

Now we turn to discuss the boundary conditions and coupling condition. From \eqref{gas-RI-eq2}--\eqref{gas-RI-eq5}, we derive
\begin{align}
    r_-^{(1)}(L,t)&=\Phi_1\left({r}_+^{(1)}(L,t)+\bar{R}_+^{(1)}(L),{r}_-^{(2)}(0,t)+\bar{R}_-^{(2)}(0),u_0(t)\right)\notag \\[1mm]
    &\quad -\Phi_1(\bar{R}_+^{(1)}(L),\bar{R}_-^{(2)}(0),\bar{u}_0) \label{eq1.11}\\[2mm]
    r_+^{(2)}(0,t)&=\Phi_2\left({r}_+^{(1)}(L,t)+\bar{R}_+^{(1)}(L),{r}_-^{(2)}(0,t)+\bar{R}_-^{(2)}(0),u_0(t)\right)\notag \\[1mm]
    &\quad -\Phi_2(\bar{R}_+^{(1)}(L),\bar{R}_-^{(2)}(0),\bar{u}_0)  \label{eq1.12}
\end{align}
and
\begin{align}\label{eq1.13}
    r_+^{(1)}(0,t)=u_1(t)-\bar{u}_1,\qquad
    r_-^{(2)}(L,t)=u_2(t)-\bar{u}_2.
\end{align}
According to \eqref{R-initial-data}, the initial data are given by 
\begin{equation}\label{deviation-initial-data}
    r^{(i)}(x,0)=r_{0}^{(i)}(x)=R_{0}^{(i)}(x)-\bar{R}^{(i)}(x).
\end{equation}
We assume that the initial data $r_{0}^{(i)}(x)$ are given in the Sobolev space $H^2(0,L)$ and satisfy
\begin{equation}\label{initial data}
\|r_{0}^{(i)}\|_{H^2(0,L)}\leq \epsilon_{in}(T),\qquad i=1,2
\end{equation}
with $\epsilon_{in}(T)>0$ a small constant. The constant $\epsilon_{in}(T)$ is chosen sufficiently small for two
purposes. First, it guarantees the existence of an $H^2$-solution on
the prescribed interval $[0,T]$. Second, it ensures that every
intermediate state of an optimal solution remains in the 
neighborhood of the stationary state in which the Lyapunov and
constructed control arguments of Section~3 apply (see Remark \ref{remark32}).


In addition, we assume that the initial data $r_0=(r_0^{(1)},r_0^{(2)})$ satisfy the zeroth- and first-order compatibility conditions associated with
the boundary and nodal conditions
\eqref{eq1.11}--\eqref{eq1.13}. More precisely, there exist
initial control values
\[
u_i(0) = \bar{u}_i+u_i^{\mathrm{in}},\qquad \partial_t u_i(0)=\dot u_i^{\mathrm{in}},
\qquad i=0,1,2,
\]
such that the boundary and nodal conditions and their first time
derivatives are satisfied at $t=0$. 

\subsection{The optimal control problem}
Let a time-horizon $T>0$ be given. 
Consider the optimal control problem with the objective function
\begin{align}\label{objective-func}
J_T(u_0, u_1, u_2)= \sum_{i=1}^2\int_0^T \|r^{(i)}(\cdot,t)\|^2_{H^2(0,L)} dt + \sum_{i=0}^2 \|u_i-\bar{u}_i\|_{H^2(0,T)}^2.
\end{align}
Here $r^{(i)}=r^{(i)}(x,t)$ with $i=1,2$ satisfies equations and boundary conditions \eqref{gas-r-eq1}--\eqref{eq1.13}, $u_{i}=u_{i}(t)$ with $i=0,1,2$ are control terms, and the notation 
$H^2(0,T)$ represents the Sobolev norm as that in \cite{Coron-book}.
The objective function enforces the desired controls $\bar{u}_0,\bar{u}_1,\bar{u}_2$ and the desired steady state $\bar{R}^{(i)}$ for pipe $(1)$ and pipe $(2)$.

In the  change to renewable energies
also hydrogen is transported in the gas pipelines,
both as pure hydrogen and in a mixture with natural gas.
In both cases it is possible that the
steel of the pipelines is damaged
by hydrogen embrittlement.
To mitigate this effect, 
pressure fluctuations should be avoided, see \cite{laureys2022use,xing2017atomistic}.
The $H^2$ norm in the  objective function enforces  flows 
that are suitable for such a situation. 

In order to define the optimal control problem, we introduce the set $\mathcal{U}_T$ of feasible controls. 
\begin{definition}
The feasible set $\mathcal{U}_T$ contains the control functions in $H^2(0,T)$ such that the boundary conditions are $C^1$-compatible with the initial state \eqref{deviation-initial-data} and the following control constraint is satisfied: 
\begin{align}\label{control-constraint1}
\|u_i -\bar{u}_i\|_{H^2(0,T)}\leq \epsilon(T),\qquad i=0,1,2.
\end{align}
\end{definition}

We take $\epsilon(T)$ sufficiently small. Moreover here we also take the initial  parameter $\epsilon_{in}(T)\leq \epsilon(T)$ such that the following properties hold: 
\begin{enumerate}
    \item[(1)] With constraints \eqref{initial data}--\eqref{control-constraint1} for the initial and boundary data, there exists a unique solution for the equation \eqref{gas-r-eq1}--\eqref{gas-r-eq2} in the space
$$
CH_T^2=\cap_{k=0}^2 C^k([0,T];H^{2-k}(0,L)).
$$ 
Moreover, the boundary data belong to the space $H^2(0,T)$.
This is true for sufficiently small $\epsilon(T)>0$ by the existence theory in Sobolev space \cite{Coron-book,ExistenceHyperbolic}.
\item[(2)] By the embedding theory, the $C^1$-norm of the initial and boundary data are also sufficiently small. Namely, 
\begin{align}\label{control-constraint2}
\max_{x\in [0,L]}\{ |r^{(i)}(x,t)|,|\partial_x r^{(i)}(x,t)|\}\leq C_T~\epsilon(T),\qquad i=1,2
\end{align}
and
\begin{align}\label{control-constraint3}
\max_{t\in[0,T]}\{ |u_i(t)-\bar{u}_i|,|\partial_t u_i(t)|\}\leq C_T~\epsilon(T),\qquad i=0,1,2.
\end{align}
Here $C_T>0$ is a positive constant.
Then by the theory of semi-global classical solutions \cite{MR2655971},  there exists a classical solution $r^{(i)}(x,t)\in C^1([0,T]\times [0,L])$ for $i=1,2$.
\end{enumerate}


Having these, we state the following existence result: 
\begin{theorem}
Assume that the set $\mathcal{U}_T$ of admissible controls is non-empty. Let $T > 0$ be given.
Consider the dynamic optimal control problem $\mathcal{P}(T)$:
\begin{align*}
\min_{(u_0, u_1, u_2)\in \mathcal{U}_T} &
J_T(u_0, u_1, u_2)
\end{align*}
with $J_T$ as defined in \eqref{objective-func}.
Here $r^{(i)}$ satisfies the partial differential equations \eqref{gas-r-eq1}--\eqref{gas-r-eq2} with initial condition \eqref{deviation-initial-data} and boundary conditions \eqref{eq1.11}--\eqref{eq1.13}. Then there exists an optimal solution to the problem $\mathcal{P}(T)$. 
\end{theorem}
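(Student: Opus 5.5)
We argue by the direct method of the calculus of variations. Since $\mathcal{U}_T$ is non-empty and $J_T\geq 0$, the infimum $J^*=\inf_{\mathcal{U}_T}J_T$ is finite. We take a minimizing sequence $(u_0^n,u_1^n,u_2^n)\in\mathcal{U}_T$ with $J_T(u^n)\to J^*$ and denote by $r^n=(r^{(1),n},r^{(2),n})\in CH_T^2$ the associated states given by property (1).

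\textbf{Compactness.} By the constraint \eqref{control-constraint1}, the sequence $u_i^n-\bar u_i$ is bounded in $H^2(0,T)$. After extracting a subsequence, $u_i^n\rightharpoonup u_i^*$ weakly in $H^2(0,T)$, and by Rellich, $u_i^n\to u_i^*$ strongly in $C^1([0,T])$. The closed ball is weakly closed, so the limit satisfies \eqref{control-constraint1}. The $C^1$-compatibility conditions at $t=0$ involve only $u_i(0)$ and $\partial_t u_i(0)$, and these pass to the limit under $C^1$ convergence. Hence $u^*\in\mathcal{U}_T$.

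For the states, the well-posedness theory behind property (1) gives a uniform a priori bound $\|r^n\|_{CH_T^2}\leq C(T)\,(\epsilon_{in}+\epsilon)$. From the equations \eqref{gas-r-eq1}--\eqref{gas-r-eq2}, we also get uniform bounds on $\partial_t r^n$ in $C([0,T];H^1)$ and on $\partial_t^2 r^n$ in $C([0,T];L^2)$. By Aubin--Lions/Arzelà--Ascoli, a subsequence satisfies $r^n\to r^*$ in $C([0,T];H^{1})\cap C^1([0,T];L^2)$, and hence in $C^0([0,T]\times[0,L])$. Weak-$*$ limits also give $r^*\in L^\infty(0,T;H^2)$, with $\partial_t r^*$ and $\partial_t^2 r^*$ in the corresponding spaces.

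\textbf{Identification of the limit.} This is the step I expect to be the main obstacle, because the system is quasilinear. I would pass to the limit in the equations: the nonlinear terms $(r_+^n+r_-^n)\partial_x r_\pm^n$ and the quadratic sources converge in $C([0,T];L^2)$ by the strong convergence above. The boundary and nodal conditions \eqref{eq1.11}--\eqref{eq1.13} pass to the limit because $r^n$ converges uniformly up to the boundary, $u^n\to u^*$ uniformly, and $\Phi_1,\Phi_2$ are smooth. So $r^*$ is a solution with control $u^*$. The regularity $r^*\in L^\infty(0,T;H^2)$ is weaker than $CH_T^2$, and I would close this gap with uniqueness. By property (1), the control $u^*$ generates a unique $CH_T^2$ solution $\tilde r$. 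A standard $L^2$ energy (or characteristic) estimate for the difference of two Lipschitz solutions with the same data shows $\tilde r=r^*$. Alternatively, one can show that $r^n$ is Cauchy in $C([0,T];L^2)$ by a stability estimate in terms of $\|u^n-u^m\|_{C^0}$.

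\textbf{Lower semicontinuity.} The functional $J_T$ is a sum of squared norms, which are convex and continuous, hence weakly lower semicontinuous. Since $u^n-\bar u\rightharpoonup u^*-\bar u$ in $H^2(0,T)$, the control terms satisfy $\|u_i^*-\bar u_i\|^2_{H^2}\leq\liminf_n\|u_i^n-\bar u_i\|^2_{H^2}$. For the state term, $r^n\rightharpoonup r^*$ weakly in $L^2(0,T;H^2(0,L))$ because the sequence is bounded there and the limit is identified through the strong convergence. Weak lower semicontinuity of the $L^2(0,T;H^2)$ norm then gives
\[
\sum_{i=1}^2\int_0^T\|r^{(i),*}\|_{H^2}^2\,dt\leq\liminf_{n\to\infty}\sum_{i=1}^2\int_0^T\|r^{(i),n}\|_{H^2}^2\,dt .
\]
Combining the two bounds, $J_T(u^*)\leq\liminf_n J_T(u^n)=J^*$. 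Since $u^*\in\mathcal{U}_T$, we conclude that $u^*$ is optimal.
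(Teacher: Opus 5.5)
Your proof is correct. Its skeleton matches the paper's: a minimizing sequence, weak $H^2$ and strong $C^1$ convergence of the controls, closedness of the constraint and of the compatibility conditions, identification of the weak $L^2(0,T;H^2)$ limit of the states, and weak lower semicontinuity of the squared norms.

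The genuine difference is how you obtain convergence of the states to the state generated by $u^*$. The paper invokes a continuous-dependence theorem for semi-global classical solutions. That theorem gives $r_{n_k}\to r^*$ in $C^1([0,T]\times[0,L])$ directly from $u_{n_k}\to u^*$ in $C^1$, with $r^*$ already known to be the solution driven by $u^*$. You instead take three steps:
\begin{enumerate}
\item You use only the uniform $CH^2_T$ bound from the well-posedness theory and extract a subsequence converging strongly in $C([0,T];H^1)\cap C^1([0,T];L^2)$ by Aubin--Lions.
\item You pass to the limit in the quasilinear equations and in the boundary and nodal conditions.
\item You identify the limit with the $CH^2_T$ solution for $u^*$ by a weak--strong uniqueness argument for Lipschitz solutions.
\end{enumerate}
Your route is longer but more self-contained. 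It needs only a priori $H^2$ bounds and an $L^2$ stability estimate, not continuous dependence in the $C^1$ topology, which for quasilinear systems is a stronger result than the one you use. Your weaker strong convergence is enough because the lower-semicontinuity step only uses weak convergence in $L^2(0,T;H^2)$ plus identification of the limit. The paper's route is shorter and gives $C^1$ convergence of the states for free, but it rests entirely on the cited stability theorem.

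In your uniqueness step, the only point needing care is the node. For $w=r^*-\tilde r$, the incoming data at $x=0$ in pipe $1$ and at $x=L$ in pipe $2$ vanish, since both solutions use the same control. At the node, however, the incoming components are only bounded by the outgoing ones through the Lipschitz continuity of $\Phi_1,\Phi_2$. So the $L^2$ energy for $w$ must be weighted, as in the paper's choice of large $A^{(1)},B^{(2)}$ and small $A^{(2)},B^{(1)}$, so that the nodal boundary terms are nonpositive before you apply Gronwall. The characteristic approach you mention handles this equally well.
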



\begin{proof}
In the proof, we represent $r=(r^{(1)},r^{(2)})$ and $u=(u_0,u_1,u_2)$ for simplicity. 

We consider a minimizing sequence of admissible controls $\{u_{n}\}_{n\geq 1}$ for $\mathcal{P}(T)$ and denote $r_n$ the corresponding states. Due to the cost function, the sequence of controls $\{u_{n}\}_{n\geq 1}$ is bounded with respect to the norm $\|\cdot\|_{H^2(0,T)}$. 
Therefore, we have a subsequence such that $u_{n_k}$ weakly converges in $H^2(0,T)$ and strongly converges in $C^1(0,T)$, to be precise, 
$$
u_{n_k} \rightharpoonup u^{*} ~~\text{weakly in}~H^2(0,T),\qquad 
u_{n_k} \rightarrow u^{*} ~~\text{strongly in}~C^1(0,T).
$$
Based on the continuous-dependence result for semi-global solutions (see e.g. \cite{wang2006exact}), we know that the subsequence $r_{n_k}$ satisfies 
\begin{align*}
&r_{n_k} \rightarrow r^{*} ~~\text{strongly in}~C^1([0,T]\times [0,L])
\end{align*}
where $r^{*}$ is the solution generated by the limit point $u^*$. The strong convergence of $u_{n_k}$ in $C^1$ also implies that $u^*$ is $C^1$-compatible with the initial data. Additionally, by the weakly lower-semicontinuity property of the norm for the Hilbert space $H^2(0,T)$, we have
\begin{equation}\label{proof-ex-eq1}
\|u^*-\bar{u}\|_{H^2(0,T)} \leq  \liminf_{k\rightarrow \infty} \|u_{n_k}-\bar{u}\|_{H^2(0,T)}\leq \epsilon(T).
\end{equation}
Thus $u^*$ satisfies the constraints \eqref{control-constraint1}---\eqref{control-constraint2} and thereby is in the admissible set $\mathcal{U}_T$. 

Since $r_{n_k} \rightarrow r^{*}$ in $C^1([0,T]\times [0,L])$, we have $r_{n_k} \rightarrow r^{*}$ in $L^2([0,T]\times [0,L])$. On the other hand, since the term $$\int_0^T\| r_{n_k}\|^2_{H^2(0,L)}dt$$ in the cost functional is uniformly bounded, we know that $r_{n_k} \rightharpoonup r^{**}$ in $L^2(0,T; H^2(0,L))$ after extracting a further subsequence. The space $L^2(0,T; H^2(0,L))$ is continuously embedded in $L^2([0,T]\times [0,L])$ and therefore we have $r_{n_k} \rightharpoonup r^{**}$ in $L^2([0,T]\times [0,L])$. Consequently, $r^*=r^{**}$.
Thanks to this, we have 
$$
\int_0^T \|r^* \|^2_{H^2(0,L)} dt  \leq  \liminf_{k\rightarrow \infty} \int_0^T\| r_{n_k}\|^2_{H^2(0,L)}dt.
$$
Combining this with \eqref{proof-ex-eq1}, we have
\begin{align*}
J_T(u^*)=&~\int_0^T \|r^* \|^2_{H^2(0,L)} dt+\|u^*-\bar{u}\|_{H^2(0,T)}^2 \\[2mm]
\leq &~ \liminf_{k\rightarrow \infty} \Big[\int_0^T \|r_{n_k} \|^2_{H^2(0,L)} dt + \|u_{n_k}-\bar{u}\|_{H^2(0,T)}^2\Big]=\liminf_{k\rightarrow \infty}J_T(u_{n_k}).
\end{align*}
Since $(u_n,r_n)$ is a minimizing sequence, we conclude that $(u^*,r^*)$ is an optimal solution.

\end{proof}

\section{The cheap control property}\label{section:3}
Similar to that in \cite{gugat2024turnpike}, 
our turnpike analysis also uses the following 
cheap control property:
\begin{definition}[Cheap control]
    The optimal control problem $\mathcal{P}(T)$ satisfies the cheap control property if there exists a constant $C_0>0$ (independent of $T$) such that, for all $T>0$, each optimal control for 
    $\mathcal{P}(T)$ satisfies the inequality 
    \begin{align}\label{cheap-control-prop}
        J_T(u)=&\sum_{i=1}^2\int_0^T \|r^{(i)}(\cdot,t) \|^2_{H^2(0,L)} dt + \sum_{i=0}^2 \|u_i-\bar{u}_i\|_{H^2(0,T)}^2
        \leq C_0 \sum_{i=1}^2 \|r^{(i)}_0\|^2_{H^2(0,L)}. 
    \end{align}
    Here $u(t)=(u_0(t),u_1(t),u_2(t))$ is the optimal control, $r^{(i)}(x,t)$ represents the corresponding state, and $r^{(i)}_0$ is the initial data in \eqref{deviation-initial-data}.
\end{definition}
The following subsections are devoted to the proof of the cheap control property for the optimal control problem $\mathcal{P}(T)$. We begin with an overview of the strategy and a sketch of the main steps to guide the reader through the argument.
\begin{itemize}
    \item[(1)] The first step is to obtain an estimate for the objective function with arbitrary feasible control. This can be done by exploiting a Lyapunov method.
    \item [(2)] Particularly, we take a constructed control as a feasible control and substitute it into the estimate in the first step. The objective function with this constructed control can be bounded by the initial data uniformly with respect to $T$. 
    \item [(3)] Due to the optimality, the objective function with the optimal control can be also bounded by the initial data. This implies the cheap control property.
\end{itemize}
\subsection{$H^2$-Lyapunov functional}
Similarly as in \cite{MR2996527}, 
define the Lyapunov functional $\mathcal{E}(t)=\mathcal{E}_0(t)+\mathcal{E}_1(t)+\mathcal{E}_2(t)$ by 
\begin{align*}
    \mathcal{E}_0(t) = \sum_{j=1}^2\mathcal{E}_0^{(j)}(t)= \sum_{j=1}^2 \int_0^L\frac{A^{(i)}}{\bar{\lambda}^{(i)}_+(x)}h^{(i)}_+(x) |r_+^{(i)}|^2 + \frac{B^{(i)}}{|\bar{\lambda}^{(i)}_-(x)|}h^{(i)}_-(x) |r_-^{(i)}|^2~dx
\end{align*}
where 
$$
h^{(i)}_\pm(x) = \exp\left(-\mu \int_0^x\frac{1}{\bar{\lambda}_\pm(s)}ds\right)\qquad
\frac{1}{\mu} = \int_0^L\left(\frac{1}{\bar{\lambda}_+(x)}+\frac{1}{|\bar{\lambda}_-(x)|} \right)dx.
$$
Moreover, we introduce
\begin{align*}
    \mathcal{E}_1(t) = \sum_{j=1}^2\mathcal{E}_1^{(j)}(t)= \sum_{j=1}^2 \int_0^L C^{(i)}h^{(i)}_+(x) |p_+^{(i)}|^2 +  D^{(i)} h^{(i)}_-(x) |p_-^{(i)}|^2~dx\\[1mm]
    \mathcal{E}_2(t) = \sum_{j=1}^2\mathcal{E}_2^{(j)}(t)= \sum_{j=1}^2 \int_0^L C^{(i)}h^{(i)}_+(x) |s_+^{(i)}|^2 +  D^{(i)} h^{(i)}_-(x) |s_-^{(i)}|^2~dx
\end{align*}
with $$p_{\pm}(x,t)=\partial_tr_{\pm}(x,t),\qquad s_{\pm}(x,t)=\partial_tp_{\pm}(x,t).$$ 

The following result follows from \cite{MR2996527}:
\begin{lemma}[\cite{MR2996527}]
For sufficiently small $L$, there exists a sufficiently large $c_L>0$. We can choose the positive weights
$A^{(i)}$, $B^{(i)}$, $C^{(i)}$ and $D^{(i)}$ with $i=1,2$ satisfying
\begin{itemize}
    \item[(i)] $\dfrac{A^{(i)}}{B^{(i)}}\geq c_L$ or $\dfrac{B^{(i)}}{A^{(i)}}\geq c_L$
    \item[(ii)] $\dfrac{C^{(i)}}{D^{(i)}}\geq c_L$ or $\dfrac{D^{(i)}}{C^{(i)}}\geq c_L$
\end{itemize}
such that the Lypunov functional $\mathcal{E}(t)=\mathcal{E}_0(t)+\mathcal{E}_1(t)+\mathcal{E}_2(t)$ satisfies 
\begin{align}\label{Ly-estimate}
    \frac{d}{dt}\mathcal{E}(t) \leq -\mu \mathcal{E}(t)+\sum_{j=1}^2\left(I^{(j)}_0+I^{(j)}_L\right)
\end{align}
with the boundary terms given by
\begin{align}
I^{(i)}_0 &= A^{(i)} |r_+^{(i)}(0,t)|^2-B^{(i)} |r_-^{(i)}(0,t)|^2\nonumber\\[1mm]
&~ +C^{(i)}\Big(\bar{\lambda}^{(i)}_+(0)-\frac{r^{(i)}_+(0,t)+r^{(i)}_-(0,t)}{2}\Big)\left(|p_+^{(i)}(0,t)|^2+|s_+^{(i)}(0,t)|^2\right)\nonumber\\
&~ -D^{(i)}\Big(|\bar{\lambda}^{(i)}_-(0)|+\frac{r^{(i)}_+(0,t)+r^{(i)}_-(0,t)}{2}\Big)\left(|p_-^{(i)}(0,t)|^2+|s_-^{(i)}(0,t)|^2\right)\nonumber
\end{align}
and
\begin{align}
I^{(i)}_L &= -A^{(i)}h_+^{(i)}(L)|r_+^{(i)}(L,t)|^2+B^{(i)}h_-^{(i)}(L)|r_-^{(i)}(L,t)|^2\nonumber\\[1mm]
&~-C^{(i)}h_+^{(i)}(L)\Big(\bar{\lambda}^{(i)}_+(L)+\frac{r^{(i)}_+(L,t)+r^{(i)}_-(L,t)}{2}\Big)\left(|p_+^{(i)}(L,t)|^2+|s_+^{(i)}(L,t)|^2\right)\nonumber\\
&~+ D^{(i)}h_-^{(i)}(L)\Big(|\bar{\lambda}^{(i)}_-(L)|-\frac{r^{(i)}_+(L,t)+r^{(i)}_-(L,t)}{2}\Big)\left(|p_-^{(i)}(L,t)|^2+|s_-^{(i)}(L,t)|^2\right).\nonumber
\end{align}
\end{lemma}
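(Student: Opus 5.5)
The plan is the standard weighted-energy computation of \cite{MR2996527}, done pipe by pipe; the node and boundary conditions are not used, since they enter only through the boundary terms $I_0^{(i)},I_L^{(i)}$, which are kept explicit. We fix a pipe $i$ and drop the superscript. First we differentiate $\mathcal{E}_0$ in time, substitute \eqref{gas-r-eq1}--\eqref{gas-r-eq2} for $\partial_t r_\pm$, and write the transport parts as $-\tfrac{1}{2}\lambda_\pm\partial_x(r_\pm^2)$, where
\[
\lambda_\pm=\bar\lambda_\pm-\tfrac{r_++r_-}{2}.
\]
Integration by parts against the weight $\frac{A h_+}{\bar\lambda_+}$ (resp. $\frac{B h_-}{|\bar\lambda_-|}$) then gives:
\begin{itemize}
\item boundary terms of the form $\pm A h_+ |r_+|^2$ and $\mp B h_- |r_-|^2$ at $x=0$ and $x=L$, using $h_\pm(0)=1$, which are exactly the first lines of $I_0$ and $I_L$;
\item an interior term from $\partial_x h_\pm=-\mu h_\pm/\bar\lambda_\pm$, equal to $-\mu\mathcal{E}_0$ after multiplying by $\bar\lambda_\pm/\bar\lambda_\pm$;
\item remainder terms of size $O(|r|+|\partial_x r|)\int |r|^2$, coming from the derivative of $\bar\lambda_\pm^{-1}$ against the perturbation $r_++r_-$ and from the quasilinear correction.
\end{itemize}
Since $\bar\lambda_\pm$ is constant up to $O(L)$ variation of $\bar\rho$, these remainders are small.

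For $\mathcal{E}_1$ and $\mathcal{E}_2$ we differentiate the equations once and twice in $t$. The functions $p_\pm=\partial_t r_\pm$ and $s_\pm=\partial_t p_\pm$ satisfy transport equations with the same speeds $\lambda_\pm$, plus lower-order terms bilinear in $(r,p)$ or $(r,p,s)$. The same integration by parts, now with the full speed $\lambda_\pm$ kept in the boundary terms, produces the $C,D$ lines of $I_0$ and $I_L$, including the factors $\bar\lambda_\pm\mp\frac{r_++r_-}{2}$. The interior contribution is $-\mu\mathcal{E}_1-\mu\mathcal{E}_2$ plus error terms. By the small $C^1$ bound \eqref{control-constraint2}, these errors are bounded by $C\epsilon(T)\,\mathcal{E}$ plus the source-term cross terms described next. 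The $H^2$ norm of $r$ is equivalent to the $L^2$ norms of $r,p,s$, because $\partial_x r=-\lambda^{-1}(p-\text{source})$ with $\lambda$ bounded away from zero by subsonicity.

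The main obstacle is the source term. Its linear part $-(r_++r_-)K_\pm$, with $K_\pm>0$, produces in $\dot{\mathcal E}_0$ the quadratic form
\[
-2\int\Big(\tfrac{Ah_+K_+}{\bar\lambda_+}r_+(r_++r_-)+\tfrac{Bh_-K_-}{|\bar\lambda_-|}r_-(r_++r_-)\Big)dx.
\]
The diagonal parts of this form are dissipative, but the cross terms $r_+r_-$ have no sign, and likewise for $p$ and $s$. We will control the cross terms using the fact that $K_\pm=O(\bar q^2)$ is bounded while $\mu\sim 1/L$ is large for small $L$. Concretely, we bound $|K_\pm r_+r_-|\le K(\delta r_+^2+\delta^{-1}r_-^2)$ and take one weight dominant, $A/B\ge c_L$ or the reverse, so that the cross term is absorbed into the diagonal dissipation plus a fraction $\mu/2$ of $\mathcal E_0$; here $c_L$ is chosen large as $L\to0$. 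The same absorption works for $(C,D)$ on $\mathcal{E}_1,\mathcal{E}_2$. After absorbing, we obtain
\[
\dot{\mathcal E}\le-(\mu-CK-C\epsilon)\mathcal E+\sum_i\big(I_0^{(i)}+I_L^{(i)}\big).
\]
Finally we rename $\mu$, or equivalently note that $\mu$ in the $h_\pm$ can be chosen slightly larger than the decay rate claimed, to arrive at \eqref{Ly-estimate}.
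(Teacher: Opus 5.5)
The paper gives no proof of this lemma; it imports it from \cite{MR2996527}. Your outline follows that reference: exponential weights $h_\pm$, integration by parts, the same treatment of the time-differentiated systems for $p_\pm$ and $s_\pm$, and absorption of the remaining interior terms using $\mu\gtrsim 1/L$. Your bookkeeping of the $A,B$ boundary lines is correct. There is, however, a genuine gap in the step you yourself call the main obstacle: you have the role of the weight ratio backwards.

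Write $\alpha=Ah_+/\bar\lambda_+$ and $\beta=Bh_-/|\bar\lambda_-|$. The linear source contribution $-2\int\big(\alpha K_+r_+ +\beta K_-r_-\big)(r_++r_-)\,dx$ is a negative multiple of $(r_++r_-)^2$ when $\alpha K_+=\beta K_-$. It becomes indefinite as $\alpha/\beta$ moves away from $K_-/K_+$. So dominance is not what absorbs the cross term; it is what creates it. If $A/B\ge c_L$, the $r_-^2$ half of $2\alpha K_+r_+r_-$ has a coefficient proportional to the large weight $A$. It can only be absorbed by $\tfrac{\mu}{2}\beta r_-^2$, whose coefficient is proportional to the small weight $B$. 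Precisely, bounding the interior by $-\tfrac{\mu}{2}\int(\alpha r_+^2+\beta r_-^2)$ requires
\[
(\alpha K_++\beta K_-)^2\le \alpha\beta\Big(\tfrac{\mu}{2}+2K_+\Big)\Big(\tfrac{\mu}{2}+2K_-\Big),
\]
which for $\alpha\gg\beta$ means roughly $\alpha/\beta\lesssim \mu^2/(4K_+^2)$. This is an \emph{upper} bound on the ratio.

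The correct logic runs the other way. A large ratio is forced by the coupling estimate \eqref{lemma3.2-eq1}: $A^{(1)}/B^{(1)}$ must beat the Lipschitz constant of $\Phi_1,\Phi_2$ times $h_-^{(1)}(L)/h_+^{(1)}(L)\le e^2$. That threshold is independent of $L$ because of the normalization of $\mu$. Since $\mu\ge c/L$, small $L$ makes the window $c_L\le A/B\lesssim\mu^2/K_+^2$ nonempty. This is exactly where the hypothesis ``sufficiently small $L$'' is used, and the same applies to $C/D$ at the $p$- and $s$-levels.

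There is a related omission. In $\mathcal E_1,\mathcal E_2$ the weights $Ch_+$ and $Dh_-$ carry no factor $1/\bar\lambda_\pm$. Integration by parts therefore produces $\int\big(Ch_+\,\partial_x\bar\lambda_+|p_+|^2+Dh_-\,\partial_x\bar\lambda_-|p_-|^2\big)dx$, and the same with $s$. Here $\partial_x\bar\lambda_\pm=(\bar q/\bar\rho)'>0$ is $O(1)$. The $O(L)$ variation of $\bar\lambda_\pm$ does not make its derivative small. These terms are neither $O(\epsilon)$ nor source cross terms, and they too must be beaten by $\mu$.

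Consequently the rate you actually obtain is a fixed fraction of $\mu$, not the $\mu$ in $h_\pm$. Your final renaming is therefore genuinely necessary, not cosmetic; it is harmless, since only a positive rate is used later.

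Two minor points:
\begin{itemize}
\item At the $r$-level, do not integrate the quasilinear correction $\frac{r_++r_-}{2}\partial_xr_\pm$ by parts; that creates cubic boundary terms absent from the $A,B$ lines. Bound it directly by $C\|r\|_{L^\infty}\|r\|_{L^2}\|\partial_xr\|_{L^2}\le C\epsilon\,\mathcal E$.
\item At both ends the factors produced are $\lambda_+=\bar\lambda_+-\frac{r_++r_-}{2}$ and $|\lambda_-|=|\bar\lambda_-|+\frac{r_++r_-}{2}$. The $O(\epsilon)$ corrections in the displayed $I_L$ thus carry the opposite sign, which is immaterial since only positivity of these factors is used.
\end{itemize}
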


Note that the Lyapunov method and the notations are similar to those in \cite{MR2996527}. 
For the boundary terms, we prove
\begin{lemma}
For sufficiently small $L$, there exist constants $A^{(i)}$, $B^{(i)}$, $C^{(i)}$ and $D^{(i)}$ such that the following relation holds 
\begin{align}\label{lemma3.2-eq1} 
\sum_{j=1}^2\left(I^{(j)}_0+I^{(j)}_L\right)\leq C\sum_{i=0}^2\Big(|u_i(t)-\bar{u}_i|^2+|\partial_tu_i(t)|^2+|\partial_t^2u_i(t)|^2\Big).
\end{align}
Here $C>0$ is a generic constant.
\end{lemma}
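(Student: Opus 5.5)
We sort the boundary terms $I^{(i)}_0$, $I^{(i)}_L$ by the four boundary points: the left end $x=0$ of pipe $(1)$, the right end $x=L$ of pipe $(2)$, and the node, where $x=L$ of pipe $(1)$ meets $x=0$ of pipe $(2)$. At each point, every quantity carrying a \emph{negative} sign is an outgoing quantity, i.e. one leaving the domain. Examples are $-B^{(1)}|r_-^{(1)}(0,t)|^2$ and $-A^{(1)}h_+^{(1)}(L)|r_+^{(1)}(L,t)|^2$, together with the corresponding $p,s$ terms. By \eqref{control-constraint2} we have $|r_\pm|\le C_T\epsilon(T)$. Hence, for $\epsilon(T)$ small, the factors $\bar\lambda^{(i)}_+\pm\tfrac{r_++r_-}{2}$ and $|\bar\lambda^{(i)}_-|\pm\tfrac{r_++r_-}{2}$ lie between $\tfrac12\min|\bar\lambda|$ and $2\max|\bar\lambda|$. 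Every negative term may therefore be dropped, or kept as a good term with a uniform positive coefficient. The positive terms are exactly the incoming quantities, and these we express through the boundary and nodal conditions.

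\textbf{Exterior boundaries.} At $x=0$ of pipe $(1)$, \eqref{eq1.13} gives $r_+^{(1)}(0,t)=u_1(t)-\bar u_1$, $p_+^{(1)}(0,t)=\partial_tu_1(t)$ and $s_+^{(1)}(0,t)=\partial_t^2u_1(t)$. The positive part of $I^{(1)}_0$ is therefore bounded by $C(A^{(1)}+C^{(1)})(|u_1-\bar u_1|^2+|\partial_tu_1|^2+|\partial_t^2u_1|^2)$. The same argument at $x=L$ of pipe $(2)$ bounds the positive part of $I^{(2)}_L$ by the analogous terms in $u_2$. Differentiating the boundary conditions in time is legitimate for $H^2$-solutions, since the traces of $p$ and $s$ lie in $L^2(0,T)$ and the identities hold a.e.

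\textbf{Node.} The incoming quantities are $r_-^{(1)}(L,t)$ and $r_+^{(2)}(0,t)$, given by \eqref{eq1.11}--\eqref{eq1.12}. We Taylor expand $\Phi_j$ around $(\bar R_+^{(1)}(L),\bar R_-^{(2)}(0),\bar u_0)$, using that the deviations are $O(\epsilon)$ in $C^0$:
\begin{align*}
|r_-^{(1)}(L,t)|+|r_+^{(2)}(0,t)|\le C_\Phi\big(|r_+^{(1)}(L,t)|+|r_-^{(2)}(0,t)|+|u_0(t)-\bar u_0|\big).
\end{align*}
Differentiating once and twice in time gives the same structure:
\begin{align*}
|p_-^{(1)}(L)|+|p_+^{(2)}(0)|&\le C(|p_+^{(1)}(L)|+|p_-^{(2)}(0)|+|\partial_tu_0|),\\
|s_-^{(1)}(L)|+|s_+^{(2)}(0)|&\le C(|s_+^{(1)}(L)|+|s_-^{(2)}(0)|+|\partial_t^2u_0|)+Q.
\end{align*}
Here $Q$ collects the quadratic terms from $\Phi''$, namely products of first derivatives such as $|p|^2$, $|p||\partial_tu_0|$ and $|\partial_tu_0|^2$. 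Since $|p|\le C_T\epsilon$ and $|\partial_tu_0|\le C_T\epsilon$ by \eqref{control-constraint2}--\eqref{control-constraint3}, we get $Q\le C\epsilon(|p_+^{(1)}(L)|+|p_-^{(2)}(0)|+|\partial_tu_0|)$, so $Q$ is absorbed into the first-order terms.

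\textbf{Choice of weights.} We substitute the node estimates into the node contribution $I^{(1)}_L+I^{(2)}_0$. The positive node terms are then bounded by
\begin{align*}
&C\big(B^{(1)}+C^{(2)}\big)\big(|r_+^{(1)}(L)|^2+|r_-^{(2)}(0)|^2+|p_+^{(1)}(L)|^2+|p_-^{(2)}(0)|^2+|s_+^{(1)}(L)|^2+|s_-^{(2)}(0)|^2\big)\\
&\quad+C\big(B^{(1)}+C^{(2)}\big)\big(|u_0-\bar u_0|^2+|\partial_tu_0|^2+|\partial_t^2u_0|^2\big).
\end{align*}
The first group must be absorbed by the negative terms, which involve $A^{(1)}h_+^{(1)}(L)$, $C^{(1)}h_+^{(1)}(L)$, $B^{(2)}$ and $D^{(2)}$. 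So we choose $A^{(1)}/B^{(1)}\ge c_L$ and $C^{(1)}/D^{(1)}\ge c_L$ on pipe $(1)$, and $B^{(2)}/A^{(2)}\ge c_L$ and $D^{(2)}/C^{(2)}\ge c_L$ on pipe $(2)$. These ratios are admissible under (i)--(ii) of the preceding lemma. The small-$L$ assumption keeps $h_\pm(L)$ and $\bar\lambda_\pm$ bounded above and below uniformly, so a large enough $c_L$ (depending on $C_\Phi$) absorbs the first group. What remains is $C\sum_{i=0}^2(|u_i-\bar u_i|^2+|\partial_tu_i|^2+|\partial_t^2u_i|^2)$, which is \eqref{lemma3.2-eq1}.

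\textbf{Main obstacle.} The hardest step is the second-order node estimate. We must check that the nonlinear terms from differentiating $\Phi_j$ twice are genuinely controlled by $\epsilon$ times first-order quantities. We must also check that the ratio choice stays compatible with the preceding lemma. That lemma needs $c_L$ large for its interior dissipation, while the same direction of the inequalities is needed here, so the two requirements must be consistent on each pipe. I would verify this first by making $C_\Phi$ explicit from \eqref{couplingMass}--\eqref{couplingCompressor}.
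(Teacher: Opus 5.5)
Your proposal follows the paper's proof essentially step by step. The paper groups the boundary terms by level into $I_r$, $I_p$, $I_s$ rather than by boundary point, which is immaterial. The steps are the same:
\begin{itemize}
\item outgoing terms are dropped or kept;
\item exterior incoming traces are replaced by $u_1,u_2$ and their time derivatives;
\item incoming node traces at the $r$-, $p$- and $s$-levels are bounded via local Lipschitz bounds on $\Phi_1,\Phi_2$, with the $\Phi''$ terms controlled by the smallness of $p$ and $\partial_t u_0$;
\item everything is absorbed by choosing unbalanced weights.
\end{itemize}
No explicit $C_\Phi$ is needed. The consistency with the preceding lemma that you flag is handled as in the paper, through (i)--(ii), which permit either orientation of the ratio on each pipe.

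The one step that does not follow as written is the choice of weights. The per-pipe conditions $A^{(1)}/B^{(1)}\ge c_L$ and $B^{(2)}/A^{(2)}\ge c_L$ (and likewise for $C,D$) do not guarantee the absorption, because each incoming node trace depends on the outgoing traces of \emph{both} pipes. For instance, $A^{(2)}|r_+^{(2)}(0,t)|^2$ generically produces a multiple of $A^{(2)}|r_+^{(1)}(L,t)|^2$, since $\partial_{R_+}\Phi_2\neq 0$. Only $-A^{(1)}h_+^{(1)}(L)|r_+^{(1)}(L,t)|^2$ can absorb that term, and nothing in your conditions excludes $A^{(2)}\gg A^{(1)}$.

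What is needed is that $A^{(1)}h_+^{(1)}(L)$ and $B^{(2)}$ both exceed a fixed multiple of $B^{(1)}h_-^{(1)}(L)+A^{(2)}$, where the multiple depends on the Lipschitz constants of $\Phi_1,\Phi_2$. This is the paper's condition $C_2C_0<C_1$ with $C_1=\min\{A^{(1)}h_+^{(1)}(L),B^{(2)}\}$ and $C_2=\max\{B^{(1)}h_-^{(1)}(L),A^{(2)}\}$. Analogously, $C^{(1)}$ and $D^{(2)}$ must dominate $D^{(1)}$ and $C^{(2)}$ at the $p$- and $s$-levels.

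Your own display, with the coefficient $B^{(1)}+C^{(2)}$, already shows this cross-pipe dependence. It does, however, mix the $(A,B)$ and $(C,D)$ levels, which should be absorbed separately.

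The fix is the paper's choice: take $A^{(1)},B^{(2)},C^{(1)},D^{(2)}$ large and $B^{(1)},A^{(2)},D^{(1)},C^{(2)}$ small. For example, set the former equal to a large $c_L$ and the latter equal to $1$. This still satisfies (i)--(ii), and with it your argument is complete.
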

\begin{proof}
Recall that the boundary conditions of $r^{(i)}(x,t)$ are given by \eqref{eq1.11}--\eqref{eq1.13}. We take time derivatives on these relations to obtain
\begin{align}
    p_-^{(1)}(L,t)&=\partial_{R_+}\Phi_1\cdot p_+^{(1)}(L,t) + \partial_{R_-}\Phi_1\cdot p_-^{(2)}(0,t)  +\partial_{u}\Phi_1\cdot \partial_tu_0(t)\label{eq-p1}\\[2mm]
    p_+^{(2)}(0,t)&=\partial_{R_+}\Phi_2\cdot p_+^{(1)}(L,t) + \partial_{R_-}\Phi_2\cdot p_-^{(2)}(0,t)  +\partial_{u}\Phi_2\cdot \partial_tu_0(t)\label{eq-p2}
\end{align}
and
\begin{align}
    p_+^{(1)}(0,t)=\partial_tu_1(t),\qquad
    p_-^{(2)}(L,t)=\partial_tu_2(t).
\end{align}
Moreover, differentiating the coupling conditions twice with
respect to time, we obtain
\begin{align}
s_-^{(1)}(L,t)
={}&
\partial_{R_+}\Phi_1\,s_+^{(1)}(L,t)
+\partial_{R_-}\Phi_1\,s_-^{(2)}(0,t)
+\partial_u\Phi_1\,\partial_t^2u_0(t)
\nonumber\\
&+
\partial_{R_+R_+}\Phi_1
\left|p_+^{(1)}(L,t)\right|^2
+
\partial_{R_-R_-}\Phi_1
\left|p_-^{(2)}(0,t)\right|^2
+
\partial_{uu}\Phi_1
\left|\partial_tu_0(t)\right|^2
\nonumber\\
&+
2\partial_{R_+R_-}\Phi_1
p_+^{(1)}(L,t)p_-^{(2)}(0,t)
\nonumber\\
&+
2\partial_{R_+u}\Phi_1
p_+^{(1)}(L,t)\partial_tu_0(t)
+
2\partial_{R_-u}\Phi_1
p_-^{(2)}(0,t)\partial_tu_0(t),
\label{eq-s1}
\end{align}
and
\begin{align}
s_+^{(2)}(0,t)
={}&
\partial_{R_+}\Phi_2\,s_+^{(1)}(L,t)
+\partial_{R_-}\Phi_2\,s_-^{(2)}(0,t)
+\partial_u\Phi_2\,\partial_t^2u_0(t)
\nonumber\\
&+
\partial_{R_+R_+}\Phi_2
\left|p_+^{(1)}(L,t)\right|^2
+
\partial_{R_-R_-}\Phi_2
\left|p_-^{(2)}(0,t)\right|^2
+
\partial_{uu}\Phi_2
\left|\partial_tu_0(t)\right|^2
\nonumber\\
&+
2\partial_{R_+R_-}\Phi_2
p_+^{(1)}(L,t)p_-^{(2)}(0,t)
\nonumber\\
&+
2\partial_{R_+u}\Phi_2
p_+^{(1)}(L,t)\partial_tu_0(t)
+
2\partial_{R_-u}\Phi_2
p_-^{(2)}(0,t)\partial_tu_0(t).
\label{eq-s2}
\end{align}
and
\begin{align}
    s_+^{(1)}(0,t)=\partial_t^2u_1(t),\qquad
    s_-^{(2)}(L,t)=\partial_t^2u_2(t).
\end{align}
Since $\Phi_1$ and $\Phi_2$ are of class $C^2$ and their
arguments remain in a compact set, all first- and second-order
derivatives of $\Phi_1$ and $\Phi_2$ are uniformly bounded.
Moreover, the smallness assumptions on the solution and controls,
together with the equations, imply that
\[
|p_+^{(1)}(L,t)|
+|p_-^{(2)}(0,t)|
+|\partial_tu_0(t)|
\leq \delta
\]
for some small constant $\delta>0$. Thus,
\[
|p_+^{(1)}(L,t)|^4
\leq
\delta^2|p_+^{(1)}(L,t)|^2,
\]
and analogous estimates hold for
$p_-^{(2)}(0,t)$ and $\partial_tu_0(t)$. The mixed terms are
controlled by $2|ab|\leq |a|^2+|b|^2$. Hence, there exists a constant $\widehat C_0>0$  such that
\begin{align}
|s_-^{(1)}(L,t)|^2+|s_+^{(2)}(0,t)|^2
\leq{}&
\widehat C_0\Big(
|s_+^{(1)}(L,t)|^2
+|s_-^{(2)}(0,t)|^2
+|\partial_t^2u_0(t)|^2
\nonumber\\
&\quad
+|p_+^{(1)}(L,t)|^2
+|p_-^{(2)}(0,t)|^2
+|\partial_tu_0(t)|^2
\Big).
\label{I_s-eq2}
\end{align}
We add all terms in $\sum_{j=1}^2(I_0^{(j)}+I_L^{(j)})$ and reorganize them as
$$
\sum_{j=1}^2(I_0^{(j)}+I_L^{(j)}) = I_{r}+I_p+I_s,
$$
where $I_r$ contains the terms $r_{\pm}^{(i)}$:
$$
I_r=\sum_{i=1}^2\Big[A^{(i)} |r_+^{(i)}(0,t)|^2-B^{(i)} |r_-^{(i)}(0,t)|^2
-A^{(i)}h_+^{(i)}(L)|r_+^{(i)}(L,t)|^2+B^{(i)}h_-^{(i)}(L)|r_-^{(i)}(L,t)|^2\Big],
$$
while $I_p$ and $I_s$ contain the terms $p_{\pm}^{(i)}$ and $s_{\pm}^{(i)}$ respectively:
\begin{align*}
I_p = &~\sum_{i=1}^2C^{(i)} \widehat{\lambda}^{(i)}_+(0) |p_+^{(i)}(0,t)|^2 -D^{(i)}\widehat{\lambda}^{(i)}_-(0) |p_-^{(i)}(0,t)|^2 -C^{(i)}h_+^{(i)}(L)\widehat{\lambda}^{(i)}_+(L) |p_+^{(i)}(L,t)|^2 \\[2mm]
&~+ D^{(i)}h_-^{(i)}(L)\widehat{\lambda}^{(i)}_-(L)|p_-^{(i)}(L,t)|^2\\[2mm]
I_s = &~\sum_{i=1}^2C^{(i)} \widehat{\lambda}^{(i)}_+(0) |s_+^{(i)}(0,t)|^2 -D^{(i)}\widehat{\lambda}^{(i)}_-(0) |s_-^{(i)}(0,t)|^2 -C^{(i)}h_+^{(i)}(L)\widehat{\lambda}^{(i)}_+(L) |s_+^{(i)}(L,t)|^2 \\[2mm]
&~+ D^{(i)}h_-^{(i)}(L)\widehat{\lambda}^{(i)}_-(L)|s_-^{(i)}(L,t)|^2. 
\end{align*}
Here the coefficients are defined by
\begin{align*}
&\widehat{\lambda}^{(i)}_+(0)  =\bar{\lambda}^{(i)}_+(0)-\frac{r^{(i)}_+(0,t)+r^{(i)}_-(0,t)}{2},\qquad 
\widehat{\lambda}^{(i)}_-(0)=|\bar{\lambda}^{(i)}_-(0)|+\frac{r^{(i)}_+(0,t)+r^{(i)}_-(0,t)}{2}\\[2mm]
&\widehat{\lambda}^{(i)}_+(L)  =\bar{\lambda}^{(i)}_+(L)+\frac{r^{(i)}_+(L,t)+r^{(i)}_-(L,t)}{2},\qquad 
\widehat{\lambda}^{(i)}_-(L)=|\bar{\lambda}^{(i)}_-(L)|-\frac{r^{(i)}_+(L,t)+r^{(i)}_-(L,t)}{2}.
\end{align*}
For sufficiently small $\epsilon(T)$, we know that these four numbers are all positive.

\textbf{Estimate for $I_r$:} For $r_\pm^{(i)}$ terms, we use \eqref{eq1.13} and compute  
\begin{align}
I_r
\leq&~ A^{(1)} |u_1(t)-\bar{u}_1|^2 
-A^{(1)}h_+^{(1)}(L)|r_+^{(1)}(L,t)|^2+B^{(1)}h_-^{(1)}(L)|r_-^{(1)}(L,t)|^2 \nonumber\\[2mm]
&+A^{(2)} |r_+^{(2)}(0,t)|^2-B^{(2)} |r_-^{(2)}(0,t)|^2
 +B^{(2)}h_-^{(2)}(L)|u_2(t)-\bar{u}_2|^2\nonumber \\[2mm]
 \leq& -C_1|\alpha|^2+C_2|\beta|^2 +C_3\big(|u_1(t)-\bar{u}_1|^2+|u_2(t)-\bar{u}_2|^2\big)\label{I_r-eq1}
\end{align}
Here we define $$\alpha=(r_+^{(1)}(L,t), r_-^{(2)}(0,t)),\qquad \beta=(r_-^{(1)}(L,t), r_+^{(2)}(0,t)).$$
The constants are given by  
\begin{equation}\label{I_r-eq2}
C_1=\min\{A^{(1)}h_+^{(1)}(L), B^{(2)} \} ,\quad C_2=\max\{B^{(1)}h_-^{(1)}(L), A^{(2)} \}
\end{equation}
and $C_3=\max\{A^{(1)}, B^{(2)}h_-^{(2)}(L)\}$.
From \eqref{eq1.11} and \eqref{eq1.12}, we can use the local Lipschitz continuity to conclude that
\begin{equation}\label{beta-au}
|\beta|^2\leq C_0(|\alpha|^2+|u_0-\bar{u}_0|^2)
\end{equation}
for $(\alpha,\beta,u_0)$ sufficiently close to $(0,0,\bar{u}_0)$ with $C_0>0$ a constant.
Substituting this into \eqref{I_r-eq1}, we have 
\begin{align*}
    I_r &\leq (C_2C_0-C_1)|\alpha|^2+C_2C_0 |u_0-\bar{u}_0|^2+C_3\big(|u_1(t)-\bar{u}_1|^2+|u_2(t)-\bar{u}_2|^2\big).
\end{align*}
According to the expression in \eqref{I_r-eq2}, we take $A^{(1)}$, $B^{(2)}$ sufficiently large and $A^{(2)}$, $B^{(1)}$ sufficiently small such that $C_2C_0-C_1$ is negative. Consequently, we obtain the estimate for $r_\pm^{(i)}$ terms.

\textbf{Estimate for $I_p$ and $I_s$:} For $p_\pm^{(i)}$ terms, we compute
\begin{align}
I_p \leq &~ C^{(1)} \widehat{\lambda}^{(1)}_+(0) |\partial_t u_1(t)|^2  -C^{(1)}h_+^{(1)}(L)\widehat{\lambda}^{(1)}_+(L) |p_+^{(1)}(L,t)|^2 + D^{(1)}h_-^{(1)}(L)\widehat{\lambda}^{(1)}_-(L)|p_-^{(1)}(L,t)|^2 \nonumber \\[3mm]
&+ C^{(2)} \widehat{\lambda}^{(2)}_+(0) |p_+^{(2)}(0,t)|^2 -D^{(2)}\widehat{\lambda}^{(2)}_-(0) |p_-^{(2)}(0,t)|^2 + D^{(2)}h_-^{(2)}(L)\widehat{\lambda}^{(2)}_-(L)|\partial_tu_2(t)|^2 \nonumber \\[2mm]
\leq & -\bar{C}_1\Big(|p_+^{(1)}(L,t)|^2 + |p_-^{(2)}(0,t)|^2 \Big)+\bar{C}_2\Big(|p_-^{(1)}(L,t)|^2+
|p_+^{(2)}(0,t)|^2\Big)\nonumber  \\[2mm]
&+\bar{C}_3\Big(|\partial_tu_1(t)|^2+|\partial_tu_2(t)|^2\Big)\label{I_p-eq1}
\end{align}
with
\begin{align*}
\bar{C}_1&=\min\{C^{(1)}h_+^{(1)}(L)\widehat{\lambda}^{(1)}_+(L) ,~D^{(2)}\widehat{\lambda}^{(2)}_-(0)\}>0,\\[2mm] 
\bar{C}_2&=\max\{D^{(1)}h_-^{(1)}(L)\widehat{\lambda}^{(1)}_-(L) ,~C^{(2)}\widehat{\lambda}^{(2)}_+(0)\}>0,\\[2mm]
\bar{C}_3&=\max\{C^{(1)} \widehat{\lambda}^{(1)}_+(0) ,~D^{(2)}h_-^{(2)}(L)\widehat{\lambda}^{(2)}_-(L)\}>0.
\end{align*}
In \eqref{eq-p1}--\eqref{eq-p2}, $\Phi_1$ and $\Phi_2$ are $C^2$-smooth functions with respect to the components and the solution remains in a compact set. Thus we know that there exists a constant $\bar{C}_0>0$ such that
\begin{align}\label{I_p-eq2}
    |p_-^{(1)}(L,t)|^2+|p_+^{(2)}(0,t)|^2&\leq \bar{C}_0 \Big(|p_+^{(1)}(L,t)|^2 + |p_-^{(2)}(0,t)|^2  + |\partial_tu_0(t)|^2\Big).
\end{align}

Similarly, for $s_\pm^{(i)}$ terms, we have 
\begin{align}
I_s
\leq &~ C^{(1)} \widehat{\lambda}^{(1)}_+(0) |\partial_t^2 u_1(t)|^2  -C^{(1)}h_+^{(1)}(L)\widehat{\lambda}^{(1)}_+(L) |s_+^{(1)}(L,t)|^2 + D^{(1)}h_-^{(1)}(L)\widehat{\lambda}^{(1)}_-(L)|s_-^{(1)}(L,t)|^2\nonumber\\[3mm]
&+ C^{(2)} \widehat{\lambda}^{(2)}_+(0) |s_+^{(2)}(0,t)|^2 -D^{(2)}\widehat{\lambda}^{(2)}_-(0) |s_-^{(2)}(0,t)|^2 + D^{(2)}h_-^{(2)}(L)\widehat{\lambda}^{(2)}_-(L)|\partial_t^2u_2(t)|^2\nonumber\\[2mm]
\leq & -\bar{C}_1\Big(|s_+^{(1)}(L,t)|^2 + |s_-^{(2)}(0,t)|^2 \Big)+\bar{C}_2\Big(|s_-^{(1)}(L,t)|^2+
|s_+^{(2)}(0,t)|^2\Big)\nonumber\\[2mm]
&+\bar{C}_3\Big(|\partial_t^2u_1(t)|^2+|\partial_t^2u_2(t)|^2\Big).\label{I_s-eq1}
\end{align}
Combining \eqref{I_s-eq2},\eqref{I_p-eq1}--\eqref{I_s-eq1},  we have 
\begin{align*}
I_p +I_s
\leq & ~(\widehat{C}_0\bar{C}_2-\bar{C}_1)\Big(|s_+^{(1)}(L,t)|^2 + |s_-^{(2)}(0,t)|^2 \Big) \\[2mm]
&+(\widehat{C}_0\bar{C}_2+\bar{C}_0\bar{C}_2-\bar{C}_1)\Big(|p_+^{(1)}(L,t)|^2 + |p_-^{(2)}(0,t)|^2 \Big)\\[2mm]
&+\bar{C}_3\Big(|\partial_tu_1(t)|^2+|\partial_tu_2(t)|^2+|\partial_t^2u_1(t)|^2+|\partial_t^2u_2(t)|^2\Big)\\[2mm]
&+\bar{C}_2(\bar{C}_0+\widehat{C}_0)\Big(|\partial_tu_0(t)|^2+|\partial_t^2u_0(t)|^2\Big).
\end{align*}
According to the expression of $\bar{C}_1$ and $\bar{C}_2$, we can choose $C^{(1)}$, $D^{(2)}$ to be sufficiently large and $C^{(2)}$, $D^{(1)}$ to be sufficiently small such that 
$$
\widehat{C}_0\bar{C}_2-\bar{C}_1 <
\widehat{C}_0\bar{C}_2+\bar{C}_0\bar{C}_2-\bar{C}_1 <0.
$$
This gives the estimate for $I_p$ and $I_s$. We complete the proof.
\end{proof}

Note that our Lyapunov functional $\mathcal{E}(t)$ contains the time derivatives for $r^{(i)}(x,t)$.
In order to relate them to the space
derivatives, we give without proof the following technical lemma \cite{MR3049647}
\begin{lemma}
Let a finite time $T > 0$ be given and the space-time domain $[0, L]\times [0, T ]$.
There exist a positive constant $C>0$ and a real number $\epsilon > 0$, such that for
$r_{\pm} = (r_+, r_-)^T \in (C^1(\Omega))^2$ satisfying \eqref{gas-r-eq1}--\eqref{gas-r-eq2} and $\|r_\pm\|_{C^1([0,T]\times [0,L])}<\epsilon$, the following inequalities hold:
\begin{align*}
\|\partial_xr_\pm\|_{L^2(0,L)}\leq &~C(\|\partial_tr_\pm\|_{L^2(0,L)}+\|r_\pm\|_{L^2(0,L)})\\[2mm]
\|\partial_tr_\pm\|_{L^2(0,L)}\leq &~C(\|\partial_xr_\pm\|_{L^2(0,L)}+\|r_\pm\|_{L^2(0,L)})\\[2mm]
\|\partial_{tt}r_\pm\|_{L^2(0,L)}\leq &~C(\|\partial_{xx}r_\pm\|_{L^2(0,L)}+\|\partial_{x}r_\pm\|_{L^2(0,L)}+\|r_\pm\|_{L^2(0,L)})\\[2mm]
\|\partial_{xx}r_\pm\|_{L^2(0,L)}\leq &~C(\|\partial_{tt}r_\pm\|_{L^2(0,L)}+\|\partial_{t}r_\pm\|_{L^2(0,L)}+\|r_\pm\|_{L^2(0,L)}).
\end{align*}
\end{lemma}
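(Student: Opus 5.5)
The plan is to read the four inequalities directly off the equations \eqref{gas-r-eq1}--\eqref{gas-r-eq2}, using the smallness of $\|r_\pm\|_{C^1}$ to absorb the nonlinear terms. Write each equation as
\[
\partial_t r_\pm = -\Lambda_\pm\,\partial_x r_\pm + S_\pm,\qquad \Lambda_\pm=\bar\lambda_\pm-\tfrac{r_++r_-}{2},\qquad S_\pm=-(r_++r_-)\Big(K_\pm-\tfrac{f_g}{8D}(r_++r_-)\Big).
\]
By the subsonic condition \eqref{subsonic}, $\bar\lambda_+\ge c>0$ and $|\bar\lambda_-|\ge c>0$ uniformly on $[0,L]$, because $\bar q/\bar\rho$ is continuous with values in $(0,a)$ on the compact interval. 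Hence, for $\epsilon$ small, $|\Lambda_\pm|\ge c/2$ and $|\Lambda_\pm|\le C$. Since the coefficients $K_\pm$ are bounded and $|r|\le\epsilon$, we also get $|S_\pm|\le C(|r_+|+|r_-|)$ pointwise.

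\textbf{First-order estimates.} Taking $L^2(0,L)$ norms at fixed $t$ gives
\[
\|\partial_t r_\pm\|\le C\|\partial_x r_\pm\|+C\|r\|.
\]
Solving the equation for $\partial_x r_\pm=\Lambda_\pm^{-1}(S_\pm-\partial_t r_\pm)$ gives the reverse bound. Here $\|r_\pm\|$ on the right is understood as the norm of the full pair $r=(r_+,r_-)$, because the source couples the components; I would state it that way.

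\textbf{Second-order estimates.} Differentiate the relation $\partial_t r=-\Lambda\partial_x r+S$ in $x$ and in $t$. I expect this to be the main obstacle, because the product terms must be controlled in $L^2$ without losing derivatives. Differentiating in $t$ and substituting $\partial_t\partial_x r=\partial_x(-\Lambda\partial_x r+S)$ yields
\[
\partial_{tt}r=\Lambda^2\partial_{xx}r+\Lambda\partial_x\Lambda\,\partial_x r-\partial_t\Lambda\,\partial_x r-\Lambda\,\partial_x S+\partial_t S .
\]
Each lower-order term is bounded as follows.
\begin{itemize}
\item $\partial_x\Lambda=\bar\lambda'-\tfrac12\partial_x(r_++r_-)$ is bounded in $L^\infty$ by $C$. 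This uses $\bar\lambda\in C^1$ and $\|r\|_{C^1}<\epsilon$.
\item $\partial_t\Lambda$ is bounded in $L^\infty$ in the same way.
\item $\partial_x S$ and $\partial_t S$ are bounded pointwise by $C(|\partial_x r|+|r|)$ and $C(|\partial_t r|+|r|)$ respectively, because $K_\pm$ are $C^1$ in $x$.
\end{itemize}
These products involve only one $L^\infty$ factor and one first-derivative $L^2$ factor, so no higher Sobolev embedding is needed. Combining them with the first-order bounds already proved gives the third inequality.

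For the fourth inequality, solve the same identity for $\partial_{xx}r$ by dividing by $\Lambda^2\ge c^2/4$. The remaining terms are then bounded by $\|\partial_x r\|+\|\partial_t r\|+\|r\|$. Using the first inequality to convert $\|\partial_x r\|$ into $\|\partial_t r\|+\|r\|$ then finishes the proof.

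To make this rigorous, I would argue for smooth solutions and pass to the limit by density. Alternatively, I would note that the quantities are well defined for the $H^2$-solutions in $CH^2_T$ considered here. All constants depend only on $\bar R$, $L$ and $\epsilon$, and not on $T$.
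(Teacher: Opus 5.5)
Your proof is correct. The paper states this lemma without proof, citing \cite{MR3049647}, and your argument is the standard direct one behind that citation: the uniform bounds $c/2\le|\Lambda_\pm|\le C$, which come from subsonicity and $C^1$-smallness, let you trade $\partial_t r$ for $\partial_x r$ and back; you then differentiate the equation once more and treat every lower-order product as an $L^\infty$ coefficient times an $L^2$ first derivative. Reading $\|r_\pm\|$ on the right as the norm of the full pair is appropriate, and for $CH^2_T$-solutions the identity for $\partial_{tt}r$ holds directly in $C^0([0,T];L^2(0,L))$, so the density argument is not even needed.
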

Thanks to this lemma, we know that there exists constants $c_1,c_2>0$ such that 
\begin{equation}\label{eq-equiv}
c_1 \sum_{j=1}^2\|r^{(i)}(\cdot,t)\|^2_{H^2(0,L)} \leq \mathcal{E}(t) \leq c_2 \sum_{j=1}^2\|r^{(i)}(\cdot,t)\|^2_{H^2(0,L)}.
\end{equation}

\subsection{Constructed  control and cheap control property} 
\label{subsec:connecting-control}

The constant stationary control $u_i(t)\equiv\bar u_i$ is not
necessarily compatible with the initial data. We therefore construct
a feasible control that connects the boundary values
compatible with the initial state to the stationary controls over a
fixed time interval.

By the assumption for initial values, we have
\[
u_i(0) = \bar{u}_i+u_i^{\mathrm{in}},\qquad \partial_t u_i(0)=\dot u_i^{\mathrm{in}},
\qquad i=0,1,2,
\]  
such that the zeroth- and first-order compatibility conditions hold.
Since the coupling functions $\Phi_1$ and $\Phi_2$ are smooth in a
neighborhood of the stationary state, by the trace estimate, there exists a
constant $C_{\mathrm{comp}}>0$ such that
\begin{align}
\sum_{i=0}^2
\left(
|u_i^{\mathrm{in}}|^2
+
|\dot u_i^{\mathrm{in}}|^2
\right)
\leq
C_{\mathrm{comp}}
\sum_{j=1}^2
\|r_0^{(j)}\|_{H^2(0,L)}^2.
\label{eq:compatible-data-estimate}
\end{align} 
Let $\tau_c>0$ be fixed independently of the time horizon $T$.
We now introduce two functions $\eta_0,\eta_1\in H^2(0,\infty)$
supported in $[0,\tau_c]$. For $0\leq t\leq\tau_c$, let
\begin{align}
\eta_0(t)
&=
1-3\left(\frac{t}{\tau_c}\right)^2
+2\left(\frac{t}{\tau_c}\right)^3,
\label{eq:eta0}\\
\eta_1(t)
&=
t\left(1-\frac{t}{\tau_c}\right)^2,
\label{eq:eta1}
\end{align}
and set $\eta_0(t)=\eta_1(t)=0$ for $t\geq\tau_c$. These functions
satisfy
\begin{align}
&\eta_0(0)=1,
&&\eta_0'(0)=0,
&&\eta_0(\tau_c)=\eta_0'(\tau_c)=0,
\nonumber\\[2mm]
&\eta_1(0)=0,
&&\eta_1'(0)=1,
&&\eta_1(\tau_c)=\eta_1'(\tau_c)=0.
\label{eq:eta-properties}
\end{align}
Define the constructed control by
\begin{align}
\widetilde u_i(t)
=
\bar u_i
+
 u_i^{\mathrm{in}} \eta_0(t)
+
\dot u_i^{\mathrm{in}}\eta_1(t),
\qquad i=0,1,2.
\label{eq:connecting-control}
\end{align}
It follows from \eqref{eq:eta-properties} that
\begin{align}
\widetilde u_i(0)
&=\bar{u}_i+u_i^{\mathrm{in}},
\qquad
\partial_t\widetilde u_i(0)
 =\dot u_i^{\mathrm{in}},
\label{eq:connecting-initial}
\end{align}
and
\begin{align}
\widetilde u_i(t)=\bar u_i,
\qquad
t\geq\tau_c.
\label{eq:connecting-stationary}
\end{align}
Thus, $\widetilde u$ satisfies the compatibility conditions at
$t=0$. For sufficiently small initial data, it also satisfies the
control constraint and hence belongs to $\mathcal U_T$.

Since $\eta_0$ and $\eta_1$ are fixed functions depending only on
$\tau_c$, we have
\begin{align}
\|\widetilde u_i-\bar u_i\|_{H^2(0,T)}^2
\leq
C_{\tau_c}
\left(
|u_i^{\mathrm{in}} |^2
+
|\dot u_i^{\mathrm{in}}|^2
\right).
\label{eq:connecting-H2-estimate}
\end{align}
Combining \eqref{eq:compatible-data-estimate} and
\eqref{eq:connecting-H2-estimate}, we obtain
\begin{align}
\sum_{i=0}^2
\|\widetilde u_i-\bar u_i\|_{H^2(0,T)}^2
\leq
C
\sum_{j=1}^2
\|r_0^{(j)}\|_{H^2(0,L)}^2.
\label{eq:connecting-control-cost}
\end{align}

Let $\widetilde r=(\widetilde r^{(1)},\widetilde r^{(2)})$ denote
the state corresponding to $\widetilde u$. By
\eqref{Ly-estimate} and \eqref{lemma3.2-eq1}, its Lyapunov
functional satisfies
\begin{align}
\frac{d}{dt}\mathcal E(t)
\leq
-\mu\mathcal E(t)
+
C\sum_{i=0}^2
\left(
|\widetilde u_i(t)-\bar u_i|^2
+
|\partial_t\widetilde u_i(t)|^2
+
|\partial_t^2\widetilde u_i(t)|^2
\right).
\label{eq:connecting-energy}
\end{align}
Integrating \eqref{eq:connecting-energy} over
$[0,t]$, for $0\leq t\leq\min\{T,\tau_c\}$, yields
\begin{align}
\mathcal E(t)
\leq
\mathcal E(0)
+
C\sum_{i=0}^2
\|\widetilde u_i-\bar u_i\|_{H^2(0,\tau_c)}^2.
\end{align}
Therefore, by \eqref{eq:connecting-control-cost} and the equivalence
\eqref{eq-equiv},
\begin{align}
\sup_{0\leq t\leq\min\{T,\tau_c\}}
\sum_{j=1}^2
\|\widetilde r^{(j)}(\cdot,t)\|_{H^2(0,L)}^2
\leq
C
\sum_{j=1}^2
\|r_0^{(j)}\|_{H^2(0,L)}^2.
\label{eq:connecting-state-short}
\end{align}
In particular,
\begin{align}
\int_0^{\min\{T,\tau_c\}}
\sum_{j=1}^2
\|\widetilde r^{(j)}(\cdot,t)\|_{H^2(0,L)}^2\,dt
\leq
C
\sum_{j=1}^2
\|r_0^{(j)}\|_{H^2(0,L)}^2.
\label{eq:connecting-state-integral-short}
\end{align}
If $T>\tau_c$, then \eqref{eq:connecting-stationary} and
\eqref{eq:connecting-energy} imply
\begin{align}
\frac{d}{dt}\mathcal E(t)
\leq
-\mu\mathcal E(t),
\qquad
t\in[\tau_c,T].
\end{align}
Hence
\begin{align}
\mathcal E(t)
\leq
e^{-\mu(t-\tau_c)}\mathcal E(\tau_c),
\qquad
t\in[\tau_c,T].
\label{eq:connecting-decay}
\end{align}
Using again \eqref{eq-equiv} and
\eqref{eq:connecting-state-short}, we obtain
\begin{align}
\int_{\tau_c}^T
\sum_{j=1}^2
\|\widetilde r^{(j)}(\cdot,t)\|_{H^2(0,L)}^2\,dt
&\leq
C\int_{\tau_c}^T
e^{-\mu(t-\tau_c)}\mathcal E(\tau_c)\,dt
\nonumber\\
&\leq
\frac{C}{\mu}\mathcal E(\tau_c)
\nonumber\\
&\leq
C
\sum_{j=1}^2
\|r_0^{(j)}\|_{H^2(0,L)}^2.
\label{eq:connecting-state-integral-long}
\end{align}
Combining \eqref{eq:connecting-control-cost},
\eqref{eq:connecting-state-integral-short}, and
\eqref{eq:connecting-state-integral-long}, we conclude that
\begin{align}
J_T(\widetilde u)
\leq
C_0
\sum_{j=1}^2
\|r_0^{(j)}\|_{H^2(0,L)}^2,
\label{eq:comparison-cost}
\end{align}
where $C_0>0$ is independent of $T$. If $u^*$ is an optimal
control for $\mathcal P(T)$, then
\[
J_T(u^*)
\leq
J_T(\widetilde u).
\]
Consequently,
\begin{align}
J_T(u^*)
\leq
C_0
\sum_{j=1}^2
\|r_0^{(j)}\|_{H^2(0,L)}^2.
\end{align}
This proves the cheap control property.


\begin{remark}
The cheap control property \eqref{cheap-control-prop} can also be interpreted as the integral turnpike property, see e.g. \cite{gugat2019turnpike}. In our proof, we use the constructed control to achieve this. The cheap control property can also be shown using the exact controllability of the system, which holds for a sufficiently large time horizon, see e.g. \cite{MR2655971}. 
\end{remark}

\begin{remark}\label{remark32}
For any $0<t_0<T$, the optimal control $u=(u_0,u_1,u_2)$ and the corresponding solution $r^{(i)}(x,t)$ also satisfy the inequality 
    \begin{align}\label{shifted-cheap-control}
        &\sum_{i=1}^2\int_{t_0}^T \|r^{(i)}(\cdot,t) \|^2_{H^2(0,L)} dt + \sum_{i=0}^2 \|u_i-\bar{u}_i\|_{H^2(t_0,T)}^2
        \leq C_0 \sum_{i=1}^2 \|r^{(i)}(\cdot,t_0)\|^2_{H^2(0,L)}. 
    \end{align}

It suffices to verify that the constructed control argument can be
restarted at \(t_0\). For the optimal control \(u\) and its
corresponding state \(r^{(i)}\), the cheap-control estimate
\eqref{cheap-control-prop}, the Lyapunov estimate
\eqref{Ly-estimate}--\eqref{lemma3.2-eq1}, and the norm equivalence
\eqref{eq-equiv} imply that
\[
\sum_{i=1}^{2}
\|r^{(i)}(\cdot,t)\|_{H^2(0,L)}^2
\leq
C_*
\sum_{i=1}^{2}
\|r_0^{(i)}\|_{H^2(0,L)}^2,
\qquad t\in[0,T],
\]
where \(C_*>0\) is independent of \(t\) and \(T\).
The initial constant $\epsilon_{in}(T)$ in \eqref{initial data} is chosen sufficiently small 
so that the
well-posedness, Lyapunov estimate, and  constructed control arguments apply for \(r^{(i)}(\cdot,t_0)\) with any $t_0\in[0,T]$.
Moreover, the regularity of the optimal solutions yields the compatibility conditions for \(r^{(i)}(\cdot,t_0)\). 
We now repeat the control construction of
Section~\ref{subsec:connecting-control} on \([t_0,T]\), with initial state
\(r^{(i)}(\cdot,t_0)\), and denote the resulting control by
\(\widetilde u^{\,t_0}\). Define
\[
\widehat u(t)=
\begin{cases}
u(t),&0\leq t\leq t_0,\\
\widetilde u^{\,t_0}(t),&t_0<t\leq T.
\end{cases}
\]
The compatibility conditions at \(t_0\) imply that
\(\widehat u\in H^2(0,T)^3\). Furthermore,
\[
\|\widehat u_i-\bar u_i\|_{H^2(0,T)}^2
\leq
C_{\mathrm{adm}}
\sum_{j=1}^2
\|r_0^{(j)}\|_{H^2(0,L)}^2.
\]
By choosing the initial-data constant $\epsilon_{in}(T)$ sufficiently small, the right-hand side is bounded by
\(\epsilon(T)^2\). Hence, \(\widehat u\in \mathcal U_T\). Since \(u\) is optimal and \(u|_{[0,t_0]}=\widehat u|_{[0,t_0]}\), comparison of their costs gives
\[
J_{[t_0,T]}(u)\leq
J_{[t_0,T]}(\widetilde u^{\,t_0}).
\]
Applying the estimates of Section~3.2 on \([t_0,T]\) yields
\eqref{shifted-cheap-control}.

\end{remark}

\section{Exponential Turnpike property}\label{section:4}

In this section, we state the main theorem:
\begin{theorem}\label{thm14}
Assume that the initial data for the equations \eqref{gas-r-eq1}--\eqref{gas-r-eq2} satisfy the constraint \eqref{initial data} and the compatibility conditions. Then the exponential turnpike property holds for the optimal solution of the optimal control problem $\mathcal{P}(T)$. Namely, there exist constants $K>0$ and $\gamma>0$, which are independent of $T$, such that the optimal solution for $\mathcal{P}(T)$ satisfies:
\begin{align}\label{expo-turnpike}
\sum_{i=1}^2\|r^{(i)}(\cdot,t)\|^2_{H^2(0,L)}  \leq K e^{-\gamma t} \sum_{i=1}^2\|r^{(i)}(\cdot,0)\|^2_{H^2(0,L)}
\end{align}
for any $t\in(0,T)$. 
\end{theorem}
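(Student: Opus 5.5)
The plan is to derive the exponential estimate \eqref{expo-turnpike} from the shifted cheap control inequality \eqref{shifted-cheap-control} of Remark~\ref{remark32}, combined with the uniform pointwise bound on the optimal state stated there. Write
\[
\phi(t)=\sum_{i=1}^2\|r^{(i)}(\cdot,t)\|^2_{H^2(0,L)},\qquad F(t)=\int_t^T\phi(s)\,ds .
\]
Dropping the nonnegative control cost in \eqref{shifted-cheap-control} gives, for every $t_0\in[0,T)$,
\[
F(t_0)\le C_0\,\phi(t_0)=-C_0F'(t_0).
\]
This differential inequality is the core of the argument.

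\textbf{Step 1 (integrated decay).} From $F'\le -F/C_0$, Gronwall's lemma yields
\[
F(t)\le e^{-t/C_0}F(0)\le C_0e^{-t/C_0}\phi(0),
\]
using \eqref{cheap-control-prop} for $F(0)$. Both constants are independent of $T$.

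\textbf{Step 2 (integral to pointwise).} To upgrade the bound on $F$ to a bound on $\phi$, I use the Lyapunov estimate \eqref{Ly-estimate}--\eqref{lemma3.2-eq1} on a window $[t-1,t]$ when $t\ge1$:
\[
\mathcal E(t)\le \mathcal E(s)+C\sum_{i=0}^2\|u_i-\bar u_i\|^2_{H^2(s,t)},\qquad s\in[t-1,t].
\]
Averaging over $s\in[t-1,t]$ and applying the norm equivalence \eqref{eq-equiv} gives
\[
\phi(t)\le C\int_{t-1}^t\phi\,ds+C\sum_i\|u_i-\bar u_i\|^2_{H^2(t-1,T)} .
\]
The control term is bounded by \eqref{shifted-cheap-control} at the base point $t-1$, namely by $C_0\phi(t-1)$. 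So I need a pointwise bound on $\phi(t-1)$, which is precisely what is being proved.

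\textbf{Main obstacle: closing the control term.} This circularity is the step I expect to be hardest. I would avoid the pointwise value by averaging the base point as well. For every $\sigma\in[t-2,t-1]$, \eqref{shifted-cheap-control} gives
\[
\|u-\bar u\|^2_{H^2(t-1,T)}\le\|u-\bar u\|^2_{H^2(\sigma,T)}\le C_0\phi(\sigma).
\]
Averaging over $\sigma$ bounds the control term by $C_0\int_{t-2}^{t-1}\phi\le C_0F(t-2)$. Combining with Step 2, for $t\ge2$,
\[
\phi(t)\le C\,F(t-2)\le C\,C_0e^{2/C_0}e^{-t/C_0}\phi(0).
\]
For $t\in[0,2]$, the uniform bound $\phi(t)\le C_*\phi(0)$ from Remark~\ref{remark32} suffices after enlarging $K$.

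\textbf{Conclusion.} This yields \eqref{expo-turnpike} with $\gamma=1/C_0$ and $K=\max\{C_*e^{2\gamma},\,CC_0e^{2\gamma}\}$, both independent of $T$. One thing still needs checking: the Lyapunov inequality must hold for the optimal trajectory on all of $[0,T]$. This is guaranteed by the smallness of $\epsilon_{in}(T)$ together with the uniform bound in Remark~\ref{remark32}, which keep the state in the neighbourhood where the Lyapunov estimates and \eqref{eq-equiv} are valid.
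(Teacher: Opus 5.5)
Your proof is correct, but it reaches the estimate by a different route than the paper.

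The paper first isolates a bounded-growth lemma (Lemma~\ref{lemma2.4}: $\phi(t_2)\le C_M\phi(t_1)$ for $t_1<t_2$). It proves this by integrating the Lyapunov inequality and bounding the control cost on $[t_1,T]$ by the shifted cheap control at $t_1$. It then runs a discrete induction (Lemma~\ref{lemma13}). In each window $[n\tau,(n+1)\tau]$ it picks a point where $\phi$ lies below its mean, bounds that mean by $C_0\phi(n\tau)/\tau$, and propagates forward with $C_M$. This gives $\phi(t)\le (C_0C_M/\tau)^n\phi(0)$ on $[n\tau,T]$, and choosing $\tau>C_0C_M$ yields the rate $\gamma=\tau^{-1}\log(\tau/(C_0C_M))$.

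You instead read the shifted cheap control as the differential inequality $F\le -C_0F'$ for the tail integral $F(t)=\int_t^T\phi$. You get exponential decay of $F$ by Gronwall (legitimate, since $\phi$ is continuous because the state lies in $C([0,T];H^2(0,L))$). Only then do you pass to pointwise values through a local energy estimate.

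Both arguments use the same ingredients: the cheap control restarted at every time, the Lyapunov estimate, and norm equivalence. Your version needs no induction and no tuning of $\tau$. It also yields the rate $\gamma=1/C_0$, determined by the cheap-control constant alone. The paper's rate is at best $1/(eC_0C_M)$ (optimizing over $\tau$), so there the growth constant $C_M\ge 1$ degrades the exponent, whereas in your argument it only enters $K$.

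The step you flagged as the main obstacle is easier than you made it. For each $s\in[t-1,t]$ you can bound $\|u-\bar u\|^2_{H^2(s,t)}\le\|u-\bar u\|^2_{H^2(s,T)}\le C_0\phi(s)$ before averaging. This is exactly the paper's bound $\phi(t)\le C_M\phi(s)$. Averaging over $s$ then gives $\phi(t)\le C_M F(t-1)$ directly, so the second window $[t-2,t-1]$ is unnecessary, though harmless.
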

In order to prove the theorem, we first introduce the following lemma
that contains a bound for the  temporal growth of the $H^2$-norm of the   optimal state
in terms of the $H^2$-norm of the initial state. 
\begin{lemma}\label{lemma2.4}
Let $t_1$ and $t_2$ be given time points satisfying $0\leq t_1 < t_2\leq T$. 
There exists a constant $C_M>0$ 
(independent of $T$) 
such that the  following bound holds for the optimal state
\begin{align*}
\sum_{i=1}^2 \|r^{(i)}(\cdot,t_2) \|_{H^2(0,L)}^2  \leq C_M \sum_{i=1}^2  \|r^{(i)}(\cdot,t_1) \|_{H^2(0,L)}^2.
\end{align*}    
\end{lemma}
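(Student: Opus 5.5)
The plan is to combine the Lyapunov estimate \eqref{Ly-estimate}--\eqref{lemma3.2-eq1} on $[t_1,t_2]$ with the shifted cheap control inequality \eqref{shifted-cheap-control} of Remark \ref{remark32}, restarted at $t_0=t_1$. The reason is that the Lyapunov inequality alone yields a bound for $\mathcal E(t_2)$ in terms of $\mathcal E(t_1)$ plus the $H^2(t_1,t_2)$-cost of the optimal control, and this cost is in turn controlled by the state at $t_1$, uniformly in $T$.

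\textbf{Step 1: integrate the Lyapunov inequality.} For the optimal pair $(u,r)$, \eqref{Ly-estimate} and \eqref{lemma3.2-eq1} give, for $t\in[t_1,T]$,
\[
\frac{d}{dt}\mathcal E(t)\leq -\mu\mathcal E(t)+C\sum_{i=0}^2\Big(|u_i(t)-\bar u_i|^2+|\partial_t u_i(t)|^2+|\partial_t^2 u_i(t)|^2\Big).
\]
These estimates are applicable because of the smallness guaranteed by the choice of $\epsilon_{in}(T)$, as explained in Remark \ref{remark32}. Dropping the negative term $-\mu\mathcal E$, or alternatively using Gronwall with $e^{-\mu(t_2-s)}\leq 1$, and integrating from $t_1$ to $t_2$, I obtain
\[
\mathcal E(t_2)\leq \mathcal E(t_1)+C\sum_{i=0}^2\|u_i-\bar u_i\|_{H^2(t_1,t_2)}^2\leq \mathcal E(t_1)+C\sum_{i=0}^2\|u_i-\bar u_i\|_{H^2(t_1,T)}^2 .
\]

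\textbf{Step 2: bound the control cost by the state at $t_1$.} By \eqref{shifted-cheap-control} with $t_0=t_1$,
\[
\sum_{i=0}^2\|u_i-\bar u_i\|_{H^2(t_1,T)}^2\leq C_0\sum_{i=1}^2\|r^{(i)}(\cdot,t_1)\|_{H^2(0,L)}^2 .
\]
For $t_1=0$ this is just the cheap control property \eqref{cheap-control-prop}. The constant $C_0$ is independent of $T$ and $t_1$.

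\textbf{Step 3: return to $H^2$ norms.} The norm equivalence \eqref{eq-equiv} gives $c_1\sum_i\|r^{(i)}(\cdot,t_2)\|_{H^2}^2\leq\mathcal E(t_2)$ and $\mathcal E(t_1)\leq c_2\sum_i\|r^{(i)}(\cdot,t_1)\|_{H^2}^2$. Combining the three steps yields the claim with $C_M=(c_2+CC_0)/c_1$, which is independent of $T$, $t_1$ and $t_2$.

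The main obstacle is justifying Step 2 for an intermediate time $t_1$. That is, one must verify that the optimal trajectory restricted to $[t_1,T]$ stays in the small neighbourhood where the Lyapunov lemma, the norm equivalence (Lemma of \cite{MR3049647}) and the constructed control argument are valid, and that the restarted control is admissible. I would rely on Remark \ref{remark32}, which gives a uniform bound $\sum_i\|r^{(i)}(\cdot,t)\|_{H^2}^2\leq C_*\sum_i\|r_0^{(i)}\|_{H^2}^2$ on $[0,T]$, together with the smallness of $\epsilon_{in}(T)$. One must also check that the constants in \eqref{eq-equiv} and the $C^1$-smallness required there hold uniformly in $t$, which follows from the embedding $H^2\hookrightarrow C^1$ and these uniform bounds.
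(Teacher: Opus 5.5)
Your proposal is correct and follows the paper's proof essentially step for step. You integrate \eqref{Ly-estimate}--\eqref{lemma3.2-eq1} over $[t_1,t_2]$ after dropping $-\mu\mathcal E$, bound the $H^2(t_1,T)$ control cost by the shifted cheap control estimate of Remark~\ref{remark32}, and return to $H^2$ norms via \eqref{eq-equiv}, which gives the same constant $C_M=(c_2+CC_0)/c_1$; like the paper, you rely on Remark~\ref{remark32} to justify restarting at the intermediate time $t_1$.
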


\begin{proof}
By \eqref{Ly-estimate} and \eqref{lemma3.2-eq1}, we know that 
\begin{align*}
\frac{d}{dt}\mathcal{E}(t) \leq&~ -\mu \,  \mathcal{E}(t)+\sum_{j=1}^2\left(I^{(j)}_0+I^{(j)}_L\right)\\
\leq &~C\sum_{i=0}^2\Big(|u_i(t)-\bar{u}_i|^2+|\partial_tu_i(t)|^2+|\partial_t^2u_i(t)|^2\Big).
\end{align*}
We integrate over $t\in[t_1,t_2]$ and obtain
$$
\mathcal{E}(t_2)\leq \mathcal{E}(t_1) + C\sum_{i=0}^2\|u_i(t)-\bar{u}_i\|_{H^2(t_1,t_2)}^2\leq \mathcal{E}(t_1) + C\sum_{i=0}^2\|u_i(t)-\bar{u}_i\|_{H^2(t_1,T)}^2.
$$
By the cheap control property (see \eqref{cheap-control-prop} and Remark \ref{remark32}), we have
$$
\mathcal{E}(t_2) \leq \mathcal{E}(t_1) + CC_0 \sum_{i=1}^2 \|r^{(i)}(\cdot,t_1)\|^2_{H^2(0,L)}.
$$
Furthermore, we use \eqref{eq-equiv} to get
$$
c_1 \sum_{j=1}^2\|r^{(i)}(\cdot,t_2)\|^2_{H^2(0,L)} \leq \mathcal{E}(t_2) \leq c_2 \sum_{j=1}^2\|r^{(i)}(\cdot,t_1)\|^2_{H^2(0,L)}  + CC_0 \sum_{i=1}^2 \|r^{(i)}(\cdot,t_1)\|^2_{H^2(0,L)}.
$$
At last, we take the constant $C_M=(c_2+CC_0)/c_1$ and complete the proof.
\end{proof}
\noindent 
From the above lemma about the bounded growth of the optimal state, we proceed with
a  bound for the norms of the optimal state in the time-intervals $[n\, \tau,  \, T]$,
where
$\tau \in (0, \, T)$ and  $n$ is an integer such that $n\tau <T$.

\begin{lemma}\label{lemma13}
The following inequality holds for any $t \in [n\tau,T]$ with a given constant $\tau>0$ and an integer $1\leq n\leq \lfloor T/\tau\rfloor $:
\begin{equation}
\label{yz}
\sum_{i=1}^2 \|r^{(i)}(\cdot,t) \|_{H^2(0,L)}^2 \leq \left(\frac{C_0C_M}{\tau}\right)^n \sum_{i=1}^2 \|r^{(i)}(\cdot,0) \|_{H^2(0,L)}^2.
\end{equation}
\end{lemma}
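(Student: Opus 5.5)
We prove the bound by induction on $n$. It combines three earlier results: the shifted cheap control inequality \eqref{shifted-cheap-control} from Remark \ref{remark32}, a mean-value (pigeonhole) argument on intervals of length $\tau$, and the bounded-growth Lemma \ref{lemma2.4}. Write $\Phi(t)=\sum_{i=1}^2\|r^{(i)}(\cdot,t)\|^2_{H^2(0,L)}$ for the optimal state.

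\textbf{Base case $n=1$.} Apply \eqref{shifted-cheap-control} with $t_0=0$, i.e. the cheap control property \eqref{cheap-control-prop}, and drop the control terms. This gives $\int_0^\tau \Phi(s)\,ds\le \int_0^T\Phi(s)\,ds \le C_0\Phi(0)$. Since $\Phi$ is continuous (the state lies in $CH^2_T$), the mean value theorem gives a point $s_1\in[0,\tau]$ with
\[
\Phi(s_1)\le \frac{1}{\tau}\int_0^\tau\Phi(s)\,ds\le \frac{C_0}{\tau}\Phi(0).
\]
For any $t\in[\tau,T]$ we have $t\ge s_1$, so Lemma \ref{lemma2.4} with $t_1=s_1$, $t_2=t$ yields $\Phi(t)\le C_M\Phi(s_1)\le \frac{C_0C_M}{\tau}\Phi(0)$. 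In the degenerate case $t=s_1$ the inequality holds directly, using $C_M\ge1$; this follows from the formula $C_M=(c_2+CC_0)/c_1$ with $c_2\ge c_1$.

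\textbf{Inductive step.} Assume the bound holds on $[n\tau,T]$ and that $(n+1)\tau\le T$. Apply \eqref{shifted-cheap-control} with $t_0=n\tau$. The assumption behind it, namely that the tail of an optimal trajectory satisfies cheap control from any intermediate time, is exactly what Remark \ref{remark32} provides. This gives $\int_{n\tau}^{(n+1)\tau}\Phi\,ds\le C_0\Phi(n\tau)$. The same mean-value choice produces $s_{n+1}\in[n\tau,(n+1)\tau]$ with $\Phi(s_{n+1})\le \frac{C_0}{\tau}\Phi(n\tau)$. Lemma \ref{lemma2.4} then gives, for every $t\in[(n+1)\tau,T]$,
\[
\Phi(t)\le C_M\frac{C_0}{\tau}\Phi(n\tau).
\]
Evaluating the induction hypothesis at $t=n\tau$ bounds $\Phi(n\tau)$ by $(C_0C_M/\tau)^n\Phi(0)$, which closes the induction.

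\textbf{Main obstacle.} The delicate point is the factor bookkeeping. A naive chaining would also apply Lemma \ref{lemma2.4} to pass from $n\tau$ to the point $s_{n+1}$, which would produce a factor $C_M^2$ per step. Using the value at the left endpoint $n\tau$ as the input of the shifted cheap control inequality avoids this, so each step contributes exactly one factor $C_0/\tau$ and one factor $C_M$. The only other points to check are that the constants $C_0$ and $C_M$ are independent of $t_0$ and $T$, which is asserted in Remark \ref{remark32} and Lemma \ref{lemma2.4}, and that $\Phi$ is continuous, so the mean-value point exists.
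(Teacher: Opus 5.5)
Your proof is correct and follows the paper's argument essentially step for step. In each window $[n\tau,(n+1)\tau]$ you choose a point where the state norm lies below its average. You bound that average by $\frac{C_0}{\tau}$ times the value at the left endpoint $n\tau$ using the (shifted) cheap control property \eqref{shifted-cheap-control}, then propagate forward with a single application of Lemma~\ref{lemma2.4} and close the induction exactly as the paper does. Your extra treatment of the degenerate case $t=s_1$ via $C_M\ge 1$ is a small point the paper passes over, since Lemma~\ref{lemma2.4} is stated only for $t_1<t_2$.
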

\begin{proof}
    We first prove the case  $n=1$. There exists a point $t_1\in[0,\tau]$ such that
    \begin{align*}
    \sum_{i=1}^2 \|r^{(i)}(\cdot,t_1) \|_{H^2(0,L)}^2 \leq \frac{1}{\tau} \int_{0}^{\tau}
    \sum_{i=1}^2 \|r^{(i)}(\cdot,t) \|_{H^2(0,L)}^2 dt
    \leq \frac{C_0}{\tau} \sum_{i=1}^2 \|r^{(i)}(\cdot,0) \|_{H^2(0,L)}^2.
    \end{align*}
    Note that the second inequality is due to the cheap control property.
For any $t\geq \tau \geq t_1$, we obtain by Lemma \ref{lemma2.4} about the bounded growth of the optimal state 
    \begin{align*}
    \sum_{i=1}^2 \|r^{(i)}(\cdot,t) \|_{H^2(0,L)}^2
    \leq C_M\sum_{i=1}^2 \|r^{(i)}(\cdot,t_1) \|_{H^2(0,L)}^2
    \leq \frac{C_0C_M}{\tau} \sum_{i=1}^2 \|r^{(i)}(\cdot,0) \|_{H^2(0,L)}^2.
    \end{align*}
    Now we proceed by induction.
    Suppose that   inequality  (\ref{yz}) holds for a natural number $n\geq 1$.
    Now we  show  the result for $n+1$. There exists $t_n\in[n\tau,(n+1)\tau]$ such that
\begin{align*}
    \sum_{i=1}^2 \|r^{(i)}(\cdot,t_n) \|_{H^2(0,L)}^2 \leq&~ \frac{1}{\tau} \int_{n\tau}^{(n+1)\tau}\sum_{i=1}^2 \|r^{(i)}(\cdot,t) \|_{H^2(0,L)}^2dt\\[2mm] 
    \leq&~\frac{C_0}{\tau} \sum_{i=1}^2 \|r^{(i)}(\cdot,n\tau) \|_{H^2(0,L)}^2 \leq \frac{C_0}{\tau} \left(\frac{C_0C_M}{\tau}\right)^n \sum_{i=1}^2 \|r^{(i)}(\cdot,0) \|_{H^2(0,L)}^2.
    \end{align*}
Thus for any $t\in[(n+1)\tau,T]$, we obtain by Lemma \ref{lemma2.4}
$$
\sum_{i=1}^2 \|r^{(i)}(\cdot,t) \|_{H^2(0,L)}^2 \leq C_M\sum_{i=1}^2 \|r^{(i)}(\cdot,t_n) \|_{H^2(0,L)}^2\leq \left(\frac{C_0C_M}{\tau}\right)^{n+1} \sum_{i=1}^2 \|r^{(i)}(\cdot,0) \|_{H^2(0,L)}^2
$$ 
and this completes the proof.
\end{proof}

Now we prove the exponential turnpike theorem.  
\begin{proof}[Proof of Theorem \ref{thm14}]
In this proof, we fix the constant $\tau$ in Lemma \ref{lemma13} such that 
\[\tau>C_0 \, C_M.\] 
It suffices to prove \eqref{expo-turnpike} for sufficiently large $t>\tau$, since for $t\leq \tau$,
inequality \eqref{expo-turnpike} holds if we choose the constant $K$ sufficiently large due to Lemma \ref{lemma2.4}.

    So now we consider the case $T>\tau$. 
For any $t \in [\tau, T)$, define $n = \lfloor t/\tau \rfloor$. Then $1 \leq n \leq \frac{T}{\tau}$ and $t \in [n \, \tau, T)$. By Lemma~\ref{lemma13}, we obtain
$$
\sum_{i=1}^2 \|r^{(i)}(\cdot,t)\|_{H^2(0,L)}^2 
\leq \left(\frac{C_0 C_M}{\tau}\right)^n 
\sum_{i=1}^2 \|r^{(i)}(\cdot,0)\|_{H^2(0,L)}^2.
$$

From the definition of $n$, we have $n > t/\tau - 1$. Moreover, since $\tau > C_0 \, C_M$, it follows that
$$
\left(\frac{C_0 C_M}{\tau}\right)^n 
= \left(\frac{\tau}{C_0 C_M}\right)^{-n}
\leq \left(\frac{\tau}{C_0 C_M}\right)^{1 - t/\tau}.
$$
Therefore, we obtain the exponential estimate
$$
\sum_{i=1}^2 \|r^{(i)}(\cdot,t)\|_{H^2(0,L)}^2 
\leq K \,  e^{-\gamma \, t} 
\sum_{i=1}^2 \|r^{(i)}(\cdot,0)\|_{H^2(0,L)}^2,
\quad \forall\, t \in [\tau, T),
$$
where
$$
K = \frac{\tau}{C_0 \,  C_M}, 
\qquad 
\gamma = \frac{1}{\tau} \log\!\left(\frac{\tau}{C_0 \,  C_M}\right) > 0.
$$
\end{proof}

\section{Numerical experiment}\label{section:5}

In this section, we provide a numerical example to visualize the exponential turnpike result \eqref{expo-turnpike}. Consider the network with two pipes and one compressor station shown in \hyperref[fig1]{\textit{Figure \ref*{fig1}}}. The gas network parameters given in \hyperref[tab:networkParameters]{\textit{Table \ref*{tab:networkParameters}}} are close to real world values (see, e.g., \cite{GugatSchuster2018, GugatEtAl2024, SchusterEtAl2026}), for the readers' convenience we apply the same parameters for both pipes.

\begin{table}[htbp]
	\centering
	\begin{tabular}{l l l l}
		\toprule
		Variable & Letter & Value & Unit \\
		\midrule 
		time horizon & $T$ & $12$ & h \\
		pipe friction coefficient & $f_g$ & $0.0137$ &  \\
		pipe diameter & $D$ & $0.5$ & m \\
		pipe length & $L$ & $55$ & km \\
		universal gas constant & $R$ & $8.3145$ & J/(mol K)$^{-1}$ \\
		gas temperature & $Temp$ & $283.15$ & K \\
		isentropic exponent & $\kappa$ & $0.5$ & \\
		intransient inlet density at pipe $1$ & $\bar{\rho}_0$ & $46.78$ & kg/m$^{3}$ \\
		intransient gas outflow at pipe $2$ & $\bar{q}_0$ & $200$ & kg/(m$^2$ s) \\
		\bottomrule
	\end{tabular}
	\caption{Parameters and coefficients for the example with two pipes and one compressor stations shown in \hyperref[fig1]{\textit{Figure \ref*{fig1}}}}
	\label{tab:networkParameters}
\end{table}

Note that the instationary inlet density $\bar{\rho}_0$ coincides with a inlet pressure of $65$ bar for ideal gases. For this example, we consider natural gas as ideal gas, that consists of $90\%$ methane, $6\%$ ethane and $4\%$ propane. The specific gas constant is defined by $R/M$, where $R$ is the universal gas constant and $M$ is the molar mass. This yields a specific gas constant $R_S$ for natural gas of
\begin{equation*}
	R_S = 490.69 \frac{\text{J}}{\text{kg}\cdot\text{K}}.
\end{equation*}
For ideal gas, the sound speed in the gas $a$ is defined by (see, e.g., \cite{Gas-Modell:DomschkeHillerLangTischendorf2017, GugatSchuster2018})
\begin{equation*}
	a^2 = R_S \cdot Temp \quad \Leftrightarrow \quad a = 372.75 \frac{\text{m}}{\text{s}}.
\end{equation*}
As initial condition, we apply the steady state of \eqref{gas-eq1}, \eqref{gas-eq2} with the boundary conditions $\bar{\rho}^{(1)}(0) = 43.18$ kg/m$^3$ (which coincides with a pressure of $60$ bar), $\bar{q}^{(2)}(L) = 180$ kg/m$^2$s and with the initial compressor control $\bar{u}_0 = 1$. For the optimal control problem, we slightly adapt the objective function \eqref{objective-func} by adding weights to the $H^2$-norms. The weights are introduced for numerical scaling purposes, compensating for the different magnitudes of the individual terms in the $H^2$-norm. Specifically, the $L^2$-norm the Riemann invariants are quite large compared to the $L^2$-norm of their derivatives on the time horizon of $T = 12$h$ = 43200$s. Alternatively, such scaling effects can be incorporated by a suitable non-dimensionalization of the state and control variables and of the space-time coordinates (see, e.g., \cite{SakamotoSchuster2025}). The objective function \eqref{objective-func} is given by
\begin{equation*}
	\begin{aligned}
		J_T(u_0, u_1, u_2) &= \sum_{i=1}^2\int_0^T \|r^{(i)}(\cdot,t)\|^2_{H^2(0,L)}\ dt + \sum_{i=0}^2 \|u_i-\bar{u}_i\|_{H^2(0,T)}^2 \\
		&= \sum_{i=1}^2\int_0^T \|r^{(i)}(\cdot,t)\|^2_{L^2(0,L)} + \left\| \frac{\partial}{\partial x}	r^{(i)}(\cdot,t) \right\|^2_{L^2(0,L)} + \left\| \frac{\partial^2}{\partial x^2}	r^{(i)}(\cdot,t) \right\|^2_{L^2(0,L)}\ dt \\
		&\quad+ \|u_0-\bar{u}_0\|_{L^2(0,T)}^2 + 
        \left\| \frac{\partial}{\partial t} u_0 \right\|_{L^2(0,T)}^2 + \left\| \frac{\partial^2}{\partial t^2} u_0 \right\|_{L^2(0,T)}^2 \\
		&\quad+ \sum_{i=1}^2 \|u_i-\bar{u}_i\|_{L^2(0,T)}^2 + \left\| \frac{\partial}{\partial t} u_i \right\|_{L^2(0,T)}^2 + \left\| \frac{\partial^2}{\partial t^2} u_i \right\|_{L^2(0,T)}^2.
	\end{aligned}
\end{equation*}
In order to 
obtain a meaningful result with realistic physical data, we define the adapted objective function
\begin{equation*}
	\begin{aligned}
		\tilde{J}_T(u_0, u_1, u_2) &= \sum_{i=1}^2\int_0^T  \omega_1\|r^{(i)}(\cdot,t)\|^2_{L^2(0,L)} + 
        \omega_2 \left\| \frac{\partial}{\partial x}	r^{(i)}(\cdot,t)\right\|^2_{L^2(0,L)} \\
		&\hspace{5.15cm}+ \omega_3 \left\| \frac{\partial^2}{\partial x^2} r^{(i)}(\cdot,t) \right\|^2_{L^2(0,L)}\ dt \\
		&\quad+ \omega_4 \|u_0-\bar{u}_0\|_{L^2(0,T)}^2 + \omega_5
        \left\| \frac{\partial}{\partial t} u_0 \right\|_{L^2(0,T)}^2 + \omega_6
        \left\| \frac{\partial^2}{\partial t^2} u_0 \right\|_{L^2(0,T)}^2 \\
		&\quad+ \sum_{i=1}^2 \omega_7\|u_i-\bar{u}_i\|_{L^2(0,T)}^2 + \omega_8
        \left\| \frac{\partial}{\partial t} u_i \right\|_{L^2(0,T)}^2 + \omega_9
        \left\| \frac{\partial^2}{\partial t^2} u_i \right\|_{L^2(0,T)}^2,
	\end{aligned}
\end{equation*}
with the weights 
\begin{equation*}
	\begin{aligned}
		&\omega_1 = 10^{-9}, & \qquad &\omega_4 = 1, & \qquad &\omega_7 = 10^{-5}\\
		&\omega_2 = 1, & &\omega_5 = 10^5, & &\omega_8 = 1,\\
		&\omega_3 = 10^9, & &\omega_6 = 10^{10}, & &\omega_9 = 10^5,
	\end{aligned}
\end{equation*}
that take into account the different orders of magnitude of
the state variables (density and flux) and the Riemann invariants.
%
To be precise, 
we apply different weights to $u_0$ than to $u_1$, $u_2$ since $u_0$ is defined for the coupling of the densities, while $u_1, u_2$ are defined as boundary conditions for the Riemann invariants. \\

For the numerical solution of the PDE system \eqref{gas-RI-eq1}, an upwind-downwind scheme with $12$ discretization points for the spatial derivative (i.e., a discretization in intervals of length $5$ km) together with an implicit Euler method with $73$ discretization points for the time derivative (i.e., a discretization in intervals of length $10$ min) was applied. For the numerical optimization we used the \textit{AMPL} software package with the open-source interior point solver \textit{IPOPT} and the linear sparse systems solver \textit{MUMPS} (see, e.g., \cite{ampl2002}). The exponential turnpike property \eqref{expo-turnpike} is shown in \hyperref[fig:expTP]{\textit{Figure \ref*{fig:expTP}}}, the controls are shown in \hyperref[fig:controls]{\textit{Figure \ref*{fig:controls}}} and the corresponding densities and mass fluxes at the boundaries are shown in \hyperref[fig:densitiesAndFlows]{\textit{Figure \ref*{fig:densitiesAndFlows}}}. 

\begin{figure}[htbp]
	\centering
	\includegraphics[width=.8\textwidth]{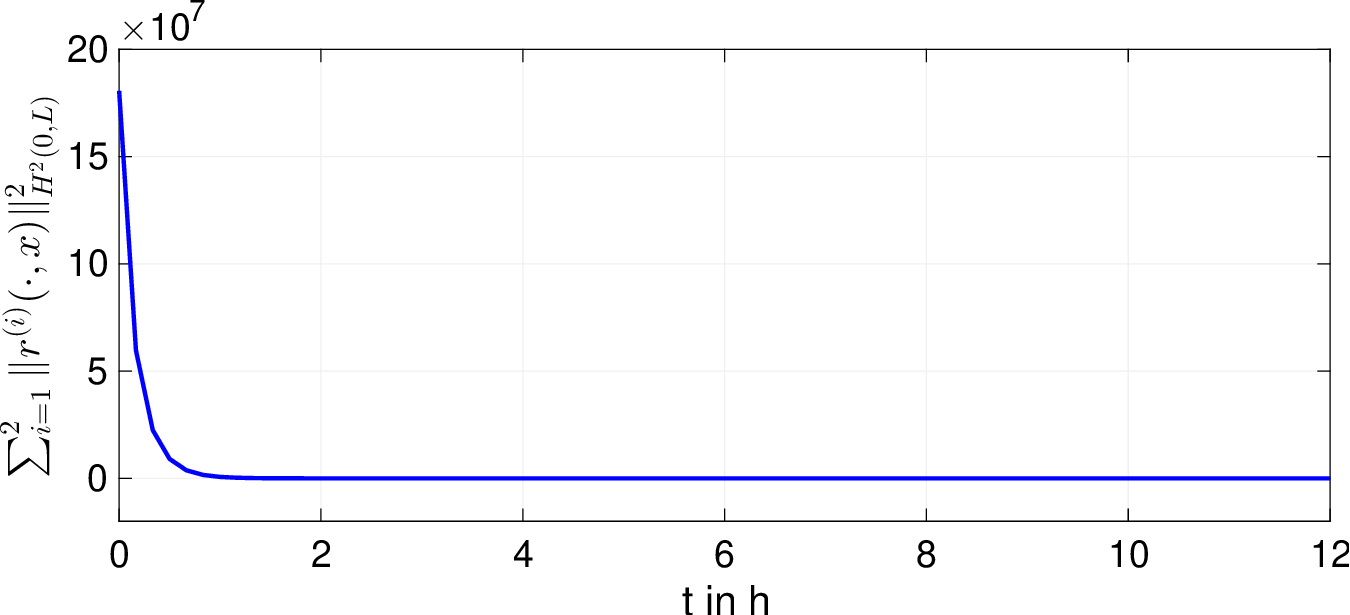}
	\caption{The exponential turnpike property for $r^{(i)}$ as stated in \eqref{expo-turnpike}}
	\label{fig:expTP}
\end{figure}

\begin{figure}[htbp]
	\centering
	\includegraphics[width=.8\textwidth]{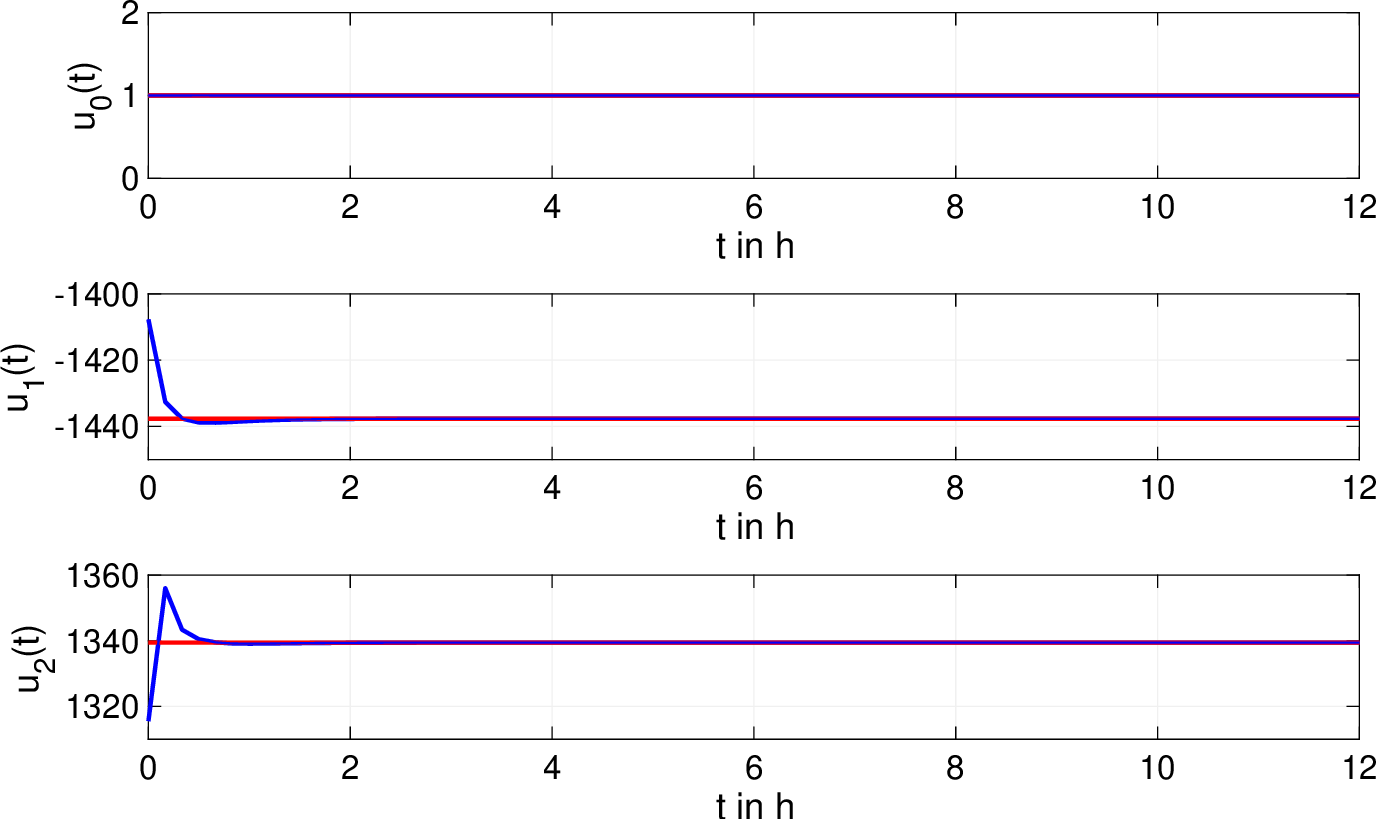}
	\caption{Compressor control $u_0$ (cf. \eqref{couplingCompressor}) and boundary controls $u_1, u_2$ (cf. \eqref{gas-RI-eq4}, \eqref{gas-RI-eq5})}
	\label{fig:controls}
\end{figure}

\begin{figure}[htbp]
	\centering
	\includegraphics[width=\textwidth]{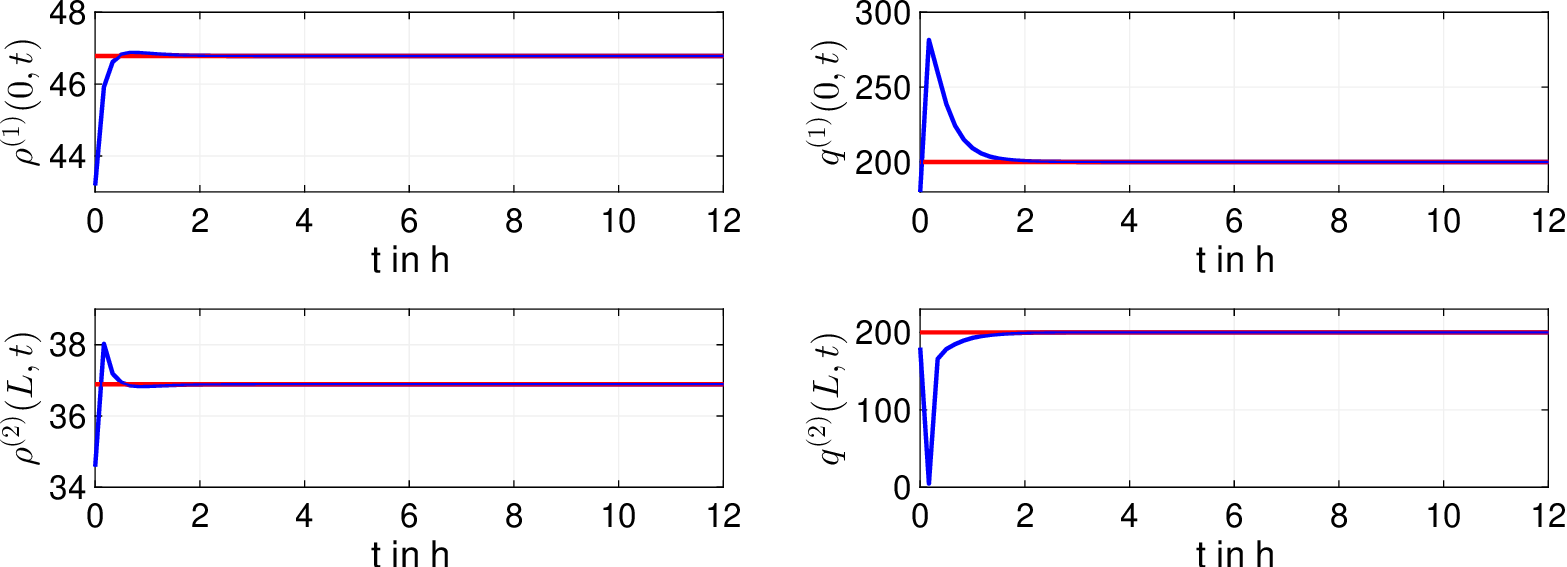}
	\caption{Incoming densities and mass fluxes $\rho^{(1)}(0,t),q^{(1)}(0,t)$ at pipe $1$ and outgoing densities and mass fluxes $\rho^{(2)}(L,t), q^{(2)}(L,t)$ at pipe 2}
	\label{fig:densitiesAndFlows}
\end{figure}

\hyperref[fig:expTP]{\textit{Figure \ref*{fig:expTP}}} illustrates the exponential decay predicted by \hyperref[thm14]{\textit{Theorem \ref*{thm14}}}. After a short initial transient, the optimal trajectory rapidly approaches the prescribed stationary solution and remains close to it over the considered time horizon. This behavior is also reflected in the individual controls in \hyperref[fig:controls]{\textit{Figure \ref*{fig:controls}}}. In particular, not only the deviation
\begin{equation*}
	r^{(i)}= R^{(i)}-\bar{R}^{(i)},
\end{equation*}
for which the exponential turnpike estimate \eqref{expo-turnpike} is established, but also the compressor and boundary controls approach their corresponding stationary values. Moreover, \hyperref[fig:densitiesAndFlows]{\textit{Figure \ref*{fig:densitiesAndFlows}}} shows that the same qualitative behavior is inherited by the physically relevant variables. The densities and mass fluxes at the network boundaries rapidly approach their stationary values. This is consistent with the fact that, locally around the considered subsonic steady state, the physical variables $(\rho, q)$ depend smoothly on the Riemann invariants, while the boundary and compressor controls are linked to their traces through the boundary and coupling conditions. Thus, the numerical experiment indicates that the turnpike behavior established analytically in terms of the Riemann-invariant deviations manifests itself directly in the quantities relevant for the operation of the gas network.

\section{Conclusions}\label{section:6}
We have proved an exponential turnpike property for an optimal boundary control problem 
for a system that is governed by a quasi-linear hyperbolic
partial differential equation 
modeling compressor-driven gas flow in pipelines. The analysis is carried out in the framework of $H^2$-solutions and applies to non-constant steady states arising in a nodal control setting.

The proof combines the existence of optimal controls for small perturbations, a cheap control property derived via a Lyapunov approach, and a constructed control. 
This exponentially stabilizing control also yields an
upper bound for the growth of the norm of the  optimal state.
These ingredients are then integrated into the general turnpike framework of \cite{MR4955417} to obtain the exponential turnpike result.
Numerical simulations illustrate the theoretical findings.

\section*{Acknowledgment}
This work was supported by DFG in the  Project  
\emph{Regelung interagierender Partikelsysteme und ihre
probabilistischen und fluid-dynamischen Beschreibungen},
Projektnummer: 560288187.

\bibliographystyle{plain}
\bibliography{ref}



\end{document}